\date{11 December 2012}
\title{Completion by Derived Double Centralizer}
\author{Marco Porta, Liran Shaul and Amnon Yekutieli}
\address{Department of  Mathematics,
Ben Gurion University, Be'er Sheva 84105, Israel}
\address{ {\em E-mail address}:  (Porta) {\tt marcoporta1@libero.it},
(Shaul) {\tt shlir@math.bgu.ac.il},
(Yekutieli) {\tt amyekut@math.bgu.ac.il} } 
\thanks{{\em Mathematics Subject Classification} 2010.
Primary: 13D07; Secondary: 13B35, 13C12, 13D09, 18E30.}
\keywords{Adic completion, derived functors, derived Morita theory.}
\thanks{This research was supported by the Israel Science Foundation and the 
Center for Advanced Studies at BGU}
\newtheorem{thm}[equation]{Theorem}
\newtheorem{cor}[equation]{Corollary}
\newtheorem{prop}[equation]{Proposition}
\newtheorem{lem}[equation]{Lemma}
\theoremstyle{definition}
\newtheorem{dfn}[equation]{Definition}
\newtheorem{rem}[equation]{Remark}
\numberwithin{equation}{section}
\newcommand{\iso}{\xrightarrow{\simeq}}
\newcommand{\xar}{\xrightarrow}
\newcommand{\opn}{\operatorname}
\newcommand{\cat}[1]{\operatorname{\mathsf{#1}}}
\newcommand{\mfrak}[1]{\mathfrak{#1}}
\newcommand{\mrm}[1]{\mathrm{#1}}
\newcommand{\mbb}[1]{\mathbb{#1}}
\newcommand{\tup}[1]{\textup{#1}}
\newcommand{\bsym}[1]{\boldsymbol{#1}}
\newcommand{\boplus}{\bigoplus\nolimits}
\newcommand{\til}[1]{\tilde{#1}}
\newcommand{\what}[1]{\widehat{#1}}
\newcommand{\K}{\mbb{K}}
\newcommand{\N}{\mbb{N}}
\newcommand{\Z}{\mbb{Z}}
\newcommand{\La}{\Lambda}
\newcommand{\Ga}{\Gamma}
\renewcommand{\a}{\mfrak{a}}
\renewcommand{\d}{\mrm{d}}
\newcommand{\ot}{\otimes}
\newcommand{\sbmat}[1]{\left[ \begin{smallmatrix} #1
\end{smallmatrix} \right]}
\newcommand{\lb}{\linebreak}
\newcommand{\distri}{\xar{ \vartriangle }}
\renewcommand{\ot}{\otimes}
\begin{document}

\begin{abstract}
Let $A$ be a commutative ring, and let $\a$ be a {\em weakly proregular} ideal
in $A$. (If $A$ is noetherian then any ideal in it is weakly proregular.)
Suppose $M$ is a compact generator of the category of {\em cohomologically
$\a$-torsion complexes}. We prove that the {\em  derived double centralizer} of
$M$ is isomorphic to the $\a$-adic completion of $A$. The proof relies on
the {\em MGM equivalence} from \cite{PSY} and on {\em derived Morita
equivalence}. Our result extends earlier work of Dwyer-Greenlees-Iyengar
\cite{DGI} and Efimov \cite{Ef}.
\end{abstract}

\maketitle

\setcounter{section}{-1}
\section{Introduction}

Let $A$ be a commutative ring. We denote by 
$\cat{D}(\cat{Mod} A)$ the derived
category of $A$-modules. Given $M \in \cat{D}(\cat{Mod} A)$ we define 
\begin{equation} \label{equ:520}
\opn{Ext}_A(M) 
:= \bigoplus_{i \in \Z} \, \opn{Hom}_{\cat{D}(\cat{Mod} A)}(M, M[i]) .
\end{equation}
This is a graded $A$-algebra with the Yoneda multiplication,
which we call the {\em Ext algebra} of $M$. 

Suppose we choose a K-projective resolution $P \to M$. The resulting DG
$A$-algebra $B := \opn{End}_A(P)$ is called a {\em derived endomorphism DG
algebra} of $M$. It turns out (see Proposition \ref{prop:17}) that 
the DG algebra $B$ is unique up to quasi-isomorphism; and of course its
cohomology $\opn{H}(B) :=  \bigoplus_{i \in \Z} \, \opn{H}^i(B)$ 
is canonically
isomorphic to $\opn{Ext}_A(M)$ as graded $A$-algebra.

Consider the derived category $\til{\cat{D}}(\cat{DGMod} B)$
of left DG $B$-modules. We can view $P$ as an object of 
$\til{\cat{D}}(\cat{DGMod} B)$, and thus, like in (\ref{equ:520}), we get
the graded $A$-algebra $\opn{Ext}_B(P)$.
This is the {\em derived double centralizer algebra} of $M$. By Corollary
\ref{cor:300}, the graded algebra  $\opn{Ext}_B(P)$ is independent of the
resolution  $P \to M$, up to isomorphism.

Let $\a$ be an ideal in $A$. The $\a$-torsion functor $\Ga_{\a}$ can be right
derived, giving a triangulated functor 
$\mrm{R} \Ga_{\a}$ from $\cat{D}(\cat{Mod} A)$ to itself.
A complex $M \in \cat{D}(\cat{Mod} A)$ is called a {\em cohomologically
$\a$-torsion complex} if the canonical morphism 
$\mrm{R} \Ga_{\a}(M) \to M$ is an isomorphism. The full triangulated category
on the cohomologically torsion complexes is denoted by 
$\cat{D}(\cat{Mod} A)_{\a \tup{-tor}}$.
It is known that when $\a$ is finitely generated, the category 
$\cat{D}(\cat{Mod} A)_{\a \tup{-tor}}$ is compactly generated (for instance by
the Koszul complex $\opn{K}(A; \bsym{a})$ associated to a finite generating
sequence $\bsym{a} = (a_1, \ldots, a_n)$ of $\a$). 

Let us denote by $\what{A}$ the $\a$-adic completion of $A$. This is a
commutative $A$-algebra, and in it there is the ideal 
$\what{\a} := \a \cdot \what{A}$. If $\a$ is finitely generated, then the ring 
$\what{A}$ is $\what{\a}$-adically complete.

A {\em weakly proregular sequence} in $A$ is a finite sequence 
$\bsym{a}$ of elements of $A$, whose 
Koszul cohomology satisfies certain vanishing conditions; 
see Definition \ref{dfn:250}. This concept was introduced by
Alonso-Jeremias-Lipman \cite{AJL} and Schenzel \cite{Sc}. An ideal $\a$ in $A$
is called {\em weakly proregular} if it can be generated by a weakly proregular
sequence. It is important to note that if $A$ is noetherian, then any finite
sequence in it is weakly proregular, so that any ideal in $A$ is weakly
proregular. But there are some fairly natural non-noetherian examples (see
\cite[Example 3.0(b)]{AJL} and \cite[Example 4.35]{PSY}). 

Here is our main result (repeated as Theorem \ref{thm:22} in the body of the
paper).

\begin{thm} \label{thm:53}
Let $A$ be a commutative ring, let  $\a$ be a weakly proregular ideal in $A$,
and let $M$ be a compact generator of $\cat{D}(\cat{Mod} A)_{\a \tup{-tor}}$.
Choose a K-projective resolution $P \to M$, and define 
$B := \opn{End}_A(P)$. Then there is a unique isomorphism
of graded $A$-algebras
$\opn{Ext}_{B}(P) \cong \what{A}$.
\end{thm}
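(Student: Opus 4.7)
The strategy combines derived Morita equivalence with the MGM equivalence of \cite{PSY}. Since $P$ is a compact generator of $\cat{T} := \cat{D}(\cat{Mod} A)_{\a \tup{-tor}}$ with derived endomorphism DG algebra $B$, derived Morita theory provides a triangulated equivalence $F := \opn{RHom}_A(P, -) : \cat{T} \iso \til{\cat{D}}(\cat{DGMod} B)$ sending the compact generator $P$ to the free module $B$. The commuting $A$-action on $P$ (through the unit map $A \to B$) makes $F$ an $A$-linear equivalence, compatible with the natural $A$-algebra structures on derived Hom-algebras. Under $F$, the ``tensor unit'' $\mrm{R}\Ga_\a(A) \in \cat{T}$ corresponds to $\opn{RHom}_A(P, A) \in \til{\cat{D}}(\cat{DGMod} B)$ (using that $P$ is $\a$-torsion), while the counit of the Morita adjunction supplies a natural isomorphism $P \otimes_B^L \opn{RHom}_A(P, A) \iso \mrm{R}\Ga_\a(A)$ in $\cat{D}(\cat{Mod} A)$.

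Using Hom--tensor adjunction together with GM duality from \cite{PSY}, which under weak proregularity gives $L\La_\a(A) \cong \opn{RHom}_A(\mrm{R}\Ga_\a A, A) \cong \what{A}$, I would then derive
\begin{equation*}
\what{A} \cong \opn{RHom}_A(\mrm{R}\Ga_\a A, A) \cong \opn{RHom}_B\bigl( \opn{RHom}_A(P, A),\, \opn{RHom}_A(P, A) \bigr).
\end{equation*}
To conclude $\opn{Ext}_B(P) \cong \what{A}$, one then replaces $\opn{RHom}_A(P, A)$ by $P$ in the final expression. For the Koszul-complex model $P = \opn{K}(A; \bsym{a})$, the duality is the mere shift $\opn{RHom}_A(P, A) \cong P[-n]$, which does not alter the graded endomorphism algebra; for a general compact generator $M$, one invokes independence of the derived double centralizer from the choice of resolution and compact generator (cf.\ Corollary \ref{cor:300}) to reduce to the Koszul case.

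The principal obstacle is tracking $A$-algebra (not merely $A$-module) structures through the Morita and MGM equivalences, which forces one to work at the level of DG bimodules rather than with their underlying derived categories. A secondary subtlety is the identification of $\opn{REnd}_B(P)$ with $\opn{REnd}_B\bigl(\opn{RHom}_A(P, A)\bigr)$ for an arbitrary compact generator $M$; this is transparent for Koszul-type models, but requires Morita-invariance arguments in the general setting. Once an $A$-algebra isomorphism $\opn{Ext}_B(P) \cong \what{A}$ is established, its uniqueness follows from the fact that $A \to \what{A}$ is an epimorphism in the category of commutative rings.
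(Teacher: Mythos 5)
Your overall skeleton (derived Morita equivalence to move the computation from $B$ to $A$, then GM/MGM to identify the answer with $\what{A}$) is the same as the paper's, but the step where you pass from $\opn{RHom}_A(P,A)$ back to $P$ contains a genuine gap. You propose to treat a general compact generator $M$ by ``invoking independence of the derived double centralizer from the choice of resolution and compact generator (cf.\ Corollary \ref{cor:300}) to reduce to the Koszul case.'' Corollary \ref{cor:300} does not say that: it only compares two K-projective resolutions $P, P'$ of the \emph{same} DG module $M$. Independence of the double centralizer from the choice of compact generator is nowhere available before the theorem --- indeed it is, a posteriori, essentially the content of the theorem --- so the reduction to $P = \opn{K}(A;\bsym{a})$ is circular as stated. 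What replaces your ``self-duality shift'' argument in general is the pair Lemma \ref{lem:21} and Lemma \ref{lem:22}: a compact object of $\cat{D}(\cat{Mod} A)_{\a\tup{-tor}}$ is compact in $\cat{D}(\cat{Mod} A)$, hence perfect over $A$, and for $A$-perfect complexes the $A$-linear biduality $D = \opn{Hom}_A(-,I)$ (with $A \to I$ an injective resolution) gives an isomorphism of graded algebras $\opn{Ext}_B(P) \cong \opn{Ext}_{B^{\mrm{op}}}(D(P))^{\mrm{op}}$ together with $D(P) \cong \opn{Hom}_A(P,A) = F(A)$ as DG $B^{\mrm{op}}$-modules. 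This also handles the $B$ versus $B^{\mrm{op}}$ bookkeeping that your Koszul shift $\opn{RHom}_A(P,A)\cong P[-n]$ quietly mixes up ($\opn{Hom}_A(P,A)$ is a right $B$-module, $P$ a left one); the resulting $(-)^{\mrm{op}}$ is harmless only because $\what{A}$ is commutative, but it has to be tracked.

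A second, smaller error: the uniqueness does not follow from ``$A \to \what{A}$ is an epimorphism in the category of commutative rings,'' because that statement is false in general --- e.g.\ $\Z \to \Z_p$ is not a ring epimorphism (localizing would make $\mbb{Q} \to \mbb{Q}_p$ an epimorphism, and a proper field extension never is). The correct argument is the one in the paper: any $A$-algebra automorphism $\varphi$ of $\what{A}$ preserves the ideals $\what{\a}^{\,n} = \a^n \what{A}$, hence is $\what{\a}$-adically continuous; since it fixes the dense image of $A$ and $\what{A}$ is $\what{\a}$-adically complete and separated ($\a$ being finitely generated), $\varphi$ is the identity, so the isomorphism $\opn{Ext}^0_B(P) \cong \what{A}$ is unique. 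The rest of your outline (using $F(A) \cong F(\mrm{R}\Ga_{\a}(A))$ via GM duality, and $\opn{Ext}_A(\mrm{R}\Ga_{\a}(A)) \cong \what{A}$ via GM/MGM) is consistent with the paper's route, but as written the proof is not complete without the perfectness-plus-biduality input and with the uniqueness argument repaired.
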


Our result extends earlier work of Dwyer-Greenlees-Iyengar \cite{DGI}
and Efimov \cite{Ef}; see Remark \ref{rem:1} for a discussion.

Let us say a few words on the proof of Theorem \ref{thm:53}. 
We use {\em derived Morita theory} to find an isomorphism of graded algebras
between  $\opn{Ext}_B(P)$ and $\opn{Ext}_A(N)^{\mrm{op}}$, where 
$N :=  \mrm{R} \Ga_{\a}(A)$. The necessary facts about derived Morita theory
are recalled in Section \ref{sec:der-morita}. We then use {\em MGM equivalence}
(recalled in Section \ref{sec:kosz}) to prove that 
$\opn{Ext}_A(N) \cong \opn{Ext}_A(\what{A}) \cong \what{A}$.

\medskip \noindent
\textbf{Acknowledgments.}
We wish to thank Bernhard Keller, John Greenlees, Alexander Efimov, Maxim
Kontsevich, Vladimir Hinich and Peter J{\o}rgensen for helpful discussions.
We are also grateful to the anonymous referee for a careful reading of the
paper and constructive remarks.

\section{Weak Proregularity and MGM Equivalence} \label{sec:kosz}

Let $A$ be a commutative ring, and let $\a$ be an ideal in it. 
(We do not assume that $A$ is noetherian or $\a$-adically complete.)
There are two operations on $A$-modules associated to this data: the {\em
$\a$-adic completion} and the {\em $\a$-torsion}. 
For an $A$-module $M$ its $\a$-adic completion is the $A$-module
$\La_{\a} (M) = \what{M} := \lim_{\leftarrow i} \, M / \a^i M$.
An element $m \in M$ is called an $\a$-torsion element if
 $\a^i m = 0$ for $i \gg 0$. The $\a$-torsion elements form 
the $\a$-torsion submodule $\Gamma_{\a} (M)$ of $M$.

Let us denote by $\cat{Mod} A$ the category of $A$-modules. So we have additive
functors $\Lambda_{\a}$ and $\Gamma_{\a}$ from $\cat{Mod} A$ to itself. 
The functor $\Gamma_{\a}$ is left exact; whereas $\Lambda_{\a}$ is neither left
exact nor right exact. An $A$-module is called {\em $\a$-adically complete} if
the canonical homomorphism $\tau_M : M \to \La_{\a}(M)$ is bijective
(some texts would say that $M$ is complete and separated); and $M$
is {\em $\a$-torsion} if the canonical homomorphism 
$\sigma_M : \Ga_{\a}(M) \to M$ is bijective.
If the ideal $\a$ is finitely generated, then the functor $\La_{\a}$ is
idempotent; namely for any module $M$, its completion $\La_{\a}(M)$ is
$\a$-adically complete. (There are counterexamples to that for infinitely
generated ideals -- see \cite[Example 1.8]{Ye}.)

The derived category of $\cat{Mod} A$ is denoted by $\cat{D}(\cat{Mod} A)$. 
The derived functors
\[ \mrm{L} \Lambda_{\a}, \mrm{R} \Gamma_{\a} : 
\cat{D}(\cat{Mod} A) \to \cat{D}(\cat{Mod} A) \]
exist. The left derived functor $\mrm{L} \Lambda_{\a}$ is constructed using
K-flat resolutions, and the right derived functor $\mrm{R} \Gamma_{\a}$
is constructed using K-injective resolutions.
This means that for any K-flat complex $P$, the canonical morphism 
$\xi_P : \mrm{L} \Lambda_{\a}(P) \to \Lambda_{\a}(P)$
is an isomorphism; and for any K-injective complex $I$, the canonical morphism 
$\xi_I : \Gamma_{\a}(I) \to \mrm{R} \Gamma_{\a}(I)$
is an isomorphism.
The relationship between the derived functors $\mrm{R} \Gamma_{\a}$ and 
$\mrm{L} \Lambda_{\a}$ was first studied in \cite{AJL}, where the {\em
Greenlees-May duality} was established (following the paper \cite{GM}).

A complex $M \in \cat{D}(\cat{Mod} A)$ is called a {\em cohomologically
$\a$-torsion complex} if the canonical morphism 
$\sigma^{\mrm{R}}_M : \mrm{R} \Gamma_{\a} (M) \to M$
is an isomorphism.
The complex $M$  is called a {\em cohomologically $\a$-adically complete
complex} if the canonical morphism 
$\tau^{\mrm{L}}_M : M \to \mrm{L} \Lambda_{\a} (M)$
is an isomorphism.
We denote by $\cat{D}(\cat{Mod} A)_{\a \tup{-tor}}$ and 
$\cat{D}(\cat{Mod} A)_{\a \tup{-com}}$
the full subcategories of $\cat{D}(\cat{Mod} A)$ consisting of
cohomologically $\a$-torsion complexes and cohomologically $\a$-adically
complete complexes, respectively. These are triangulated subcategories.

Very little can be said about the functors 
$\mrm{L} \Lambda_{\a}$ and $\mrm{R}\Gamma_{\a}$, and about the corresponding
triangulated categories $\cat{D}(\cat{Mod} A)_{\a \tup{-tor}}$ and 
$\cat{D}(\cat{Mod} A)_{\a \tup{-com}}$, in general. However we know a lot 
when the ideal $\a$ is {\em weakly proregular}. 

Before defining weak proregularity we have to talk about {\em Koszul complexes}.
Recall that for an element $a \in A$ the Koszul complex is
\[ \mrm{K}(A; a) :=  \bigl( \cdots \to 0 \to A \xar{a \cdot} A \to 0 \to
\cdots \bigr) , \]
concentrated in degrees $-1$ and $0$. 
Given a finite sequence $\bsym{a} = (a_1, \ldots, a_n)$ of elements of $A$, 
the Koszul complex associated to this sequence is 
\[ \opn{K}(A; \bsym{a}) := 
\opn{K}(A; a_1) \otimes_A \cdots \otimes_A \opn{K}(A; a_n) . \]
This is a complex of finitely generated free $A$-modules, concentrated in
degrees $-n, \ldots, 0$. 
There is a canonical isomorphism of $A$-modules
$\mrm{H}^0 (\opn{K}(A; \bsym{a})) \cong A / (\bsym{a})$,
where $(\bsym{a})$ is the ideal generated by the sequence $\bsym{a}$. 

For any $i \geq 1$ let $\bsym{a}^i := (a_1^i, \ldots, a_n^i)$.
If $j \geq i$ then there is a canonical homomorphism of complexes 
$p_{j,i} : \opn{K}(A; \bsym{a}^j) \to \opn{K}(A; \bsym{a}^i)$,
which in $\mrm{H}^0$ corresponds to the surjection 
$A / (\bsym{a}^j) \to A / (\bsym{a}^i)$.
Thus for every $k \in \Z$ we get an inverse system of $A$-modules 
\begin{equation} \label{eqn:200}
\bigl\{ \mrm{H}^k ( \opn{K}(A; \bsym{a}^{i}) ) \bigr\}_{i \in \N} \ ,
\end{equation}
with transition homomorphisms 
\[ \mrm{H}^k (p_{j,i} ) : \mrm{H}^k ( \opn{K}(A; \bsym{a}^{j} )) \to
\mrm{H}^k ( \opn{K}(A; \bsym{a}^{i}) ) . \]
Of course for $k = 0$ the inverse limit equals the $(\bsym{a})$-adic completion
of $A$. What turns out to be crucial is the behavior of this inverse system for
$k < 0$. For more details please see \cite[Section 4]{PSY}.

An inverse system 
$\{ M_i \}_{i \in \N}$
of abelian groups, with transition maps $p_{j, i} :M_j \to M_i$,
is called {\em pro-zero} if for every $i$ there exists $j \geq i$ such that 
$p_{j, i}$ is zero.

\begin{dfn} \label{dfn:250} 
\begin{enumerate}
\item Let $\bsym{a}$ be a finite sequence in $A$. The sequence 
$\bsym{a}$ is called  a {\em weakly proregular sequence}  if for every 
$k \leq -1$ the inverse system (\ref{eqn:200}) is pro-zero.

\item An ideal $\a$ in $A$ is called a {\em a weakly proregular ideal} if
it is generated by some weakly proregular sequence. 
\end{enumerate}
\end{dfn}

The etymology of the name ``weakly proregular sequence'', and the history
of related concepts, are explained in \cite{AJL} and \cite{Sc}.

If $\bsym{a}$ is a regular sequence, then it is weakly proregular. 
More important is the following result. 

\begin{thm}[\cite{AJL}] \label{thm:500}
If $A$ is noetherian, then every finite sequence in $A$ is
weakly proregular, so that every ideal in $A$ is weakly proregular.
\end{thm}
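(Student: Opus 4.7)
My plan is to prove this by induction on the length $n$ of a generating sequence $\bsym{a} = (a_1, \ldots, a_n)$, showing directly that the inverse system $\{\mrm{H}^k(\opn{K}(A;\bsym{a}^i))\}_{i \in \N}$ is pro-zero for every $k \leq -1$; the corresponding statement for ideals then follows from the definition.

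For the base case $n = 1$ with a single element $a$, the only non-vanishing negative Koszul cohomology is $\mrm{H}^{-1}(\opn{K}(A;a^i)) = (0 :_A a^i)$, and the transition $\mrm{H}^{-1}(\opn{K}(A;a^j)) \to \mrm{H}^{-1}(\opn{K}(A;a^i))$ for $j \geq i$ is multiplication by $a^{j-i}$. Since $A$ is noetherian, the ascending chain $(0 :_A a) \subseteq (0 :_A a^2) \subseteq \cdots$ stabilizes at some index $M$. Given any $i$, setting $j := i + M$ ensures that $(0 :_A a^j) = (0 :_A a^M)$ is annihilated by $a^M = a^{j-i}$, so the transition vanishes.

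For the inductive step, write $\bsym{a} = (\bsym{a}', a_n)$ with $\bsym{a}' := (a_1, \ldots, a_{n-1})$ weakly proregular by hypothesis. The factorization $\opn{K}(A;\bsym{a}^i) = \opn{K}(A;(\bsym{a}')^i) \otimes_A \opn{K}(A;a_n^i)$ and the cone description of $\opn{K}(A;a_n^i)$ produce a distinguished triangle
\[
\opn{K}(A;(\bsym{a}')^i) \xrightarrow{a_n^i} \opn{K}(A;(\bsym{a}')^i) \to \opn{K}(A;\bsym{a}^i) \distri
\]
natural in $i$. From the associated long exact sequence one extracts a natural short exact sequence of inverse systems
\[
0 \to \mrm{H}^k(\opn{K}(A;(\bsym{a}')^i)) / a_n^i \to \mrm{H}^k(\opn{K}(A;\bsym{a}^i)) \to \ker\bigl(a_n^i \text{ on } \mrm{H}^{k+1}(\opn{K}(A;(\bsym{a}')^i))\bigr) \to 0.
\]
Since the class of pro-zero systems is closed under extensions (stabilize the quotient first, then the subobject), the degrees $k \leq -2$ are handled directly by the induction hypothesis.

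The main obstacle is the remaining case $k = -1$, where the right-hand term involves $\mrm{H}^0(\opn{K}(A;(\bsym{a}')^i)) = A/(\bsym{a}')^i A$, a system which is not pro-zero in its own right. The key point is that the inverse-system structure on this term inherited from the triangle is not the canonical surjection but the twisted transition $\bar x \mapsto \overline{a_n^{j-i} x}$; pro-zero then amounts to the statement that for every $i$ there exists $j \geq i$ with $a_n^{j-i} \cdot ((\bsym{a}')^j : a_n^j) \subseteq (\bsym{a}')^i$. This is exactly where noetherianness is used in essential form, by applying the Artin-Rees lemma to the $(\bsym{a}')$-adic filtration together with stabilization of the global $a_n$-torsion $\bigcup_\ell (0 :_A a_n^\ell)$ to produce the required uniform offset. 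Once this estimate is in hand, the induction closes and the theorem follows.
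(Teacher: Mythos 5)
The paper does not give its own proof of this theorem; it is quoted from \cite{AJL}, and the paper says a proof appears in \cite[Theorem 4.34]{PSY}. So I will assess your argument on its own merits rather than against a proof in this document.

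Your strategy — induct on $n$, write $\opn{K}(A;\bsym{a}^i)$ as the cone of $a_n^i$ on $\opn{K}(A;(\bsym{a}')^i)$, extract the short exact sequence of inverse systems with the twisted transition $\bar x \mapsto \overline{a_n^{j-i}x}$ on the quotient term, use closure of pro-zero systems under extensions and subobjects/quotients to dispose of $k\le -2$, and isolate $k=-1$ as the crux — is sound and is essentially the standard proof. The base case $n=1$ is correct. You have also correctly identified the key lemma: writing $I_i := (a_1^i,\ldots,a_{n-1}^i)$ (the ideal generated by the $i$-th powers, which is what $\mrm{H}^0(\opn{K}(A;(\bsym{a}')^i))$ actually involves — \emph{not} the power $J^i$ of $J=(\bsym{a}')$, a distinction your notation blurs), one needs: for every $i$ there is $j\ge i$ with $a_n^{j-i}(I_j : a_n^j) \subseteq I_i$.

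The gap is in the justification of that lemma. Your appeal to ``Artin--Rees for the $(\bsym{a}')$-adic filtration together with stabilization of the global $a_n$-torsion $\bigcup_\ell(0:_A a_n^\ell)$'' to get a \emph{uniform} offset does not go through as stated: if one runs Artin--Rees against $J^m \cap a_n^N A$ one lands in $J^{j-c} + (0:a_n^N)$ and the torsion summand is not absorbed into $I_i$ without further work, and in any case the filtration relevant to the Koszul $\mrm{H}^0$ is $\{I_i\}$, not $\{J^i\}$. Moreover no uniform offset is needed (and one need not exist). The correct noetherian input is simpler: for $j \ge i$ one has $I_j \subseteq I_i$, hence $(I_j:a_n^j) \subseteq (I_i : a_n^\infty)$; since $A$ is noetherian the ascending chain $(I_i:a_n^\ell)$ stabilizes, say $(I_i : a_n^\infty) = (I_i : a_n^{N_i})$, so $a_n^{N_i}(I_i:a_n^\infty)\subseteq I_i$; therefore for $j \ge i+N_i$ one gets
\[
a_n^{j-i}(I_j:a_n^j) \subseteq a_n^{j-i-N_i}\, a_n^{N_i}(I_i:a_n^{N_i}) \subseteq a_n^{j-i-N_i} I_i \subseteq I_i .
\]
Replace the Artin--Rees paragraph with this and the induction closes correctly.
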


Here is another useful fact. 

\begin{thm}[\cite{Sc}] \label{thm:501}
Let $\a$ be a weakly proregular ideal in a ring $A$. Then any finite sequence
that generates $\a$ is weakly proregular.
\end{thm}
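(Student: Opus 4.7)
My plan is to reduce the theorem to the following key lemma: if $\bsym{a} = (a_1, \ldots, a_n)$ is a finite sequence in $A$ and $c$ is an element of the ideal $(\bsym{a})$, then $\bsym{a}$ is weakly proregular if and only if the augmented sequence $(\bsym{a}, c)$ is weakly proregular. Granting this lemma, the theorem follows by a standard add-then-remove argument: given two generating sequences $\bsym{a}$ and $\bsym{b}$ of $\a$ with $\bsym{a}$ weakly proregular, iteratively append each $b_j \in \a = (\bsym{a})$ (using the forward direction of the lemma) to conclude that the concatenation $(\bsym{a}, \bsym{b})$ is weakly proregular, then iteratively delete each $a_i \in \a = (\bsym{b})$ (using the reverse direction; at each step $a_i$ remains in the ideal generated by the surviving elements, since $\bsym{b}$ is still present) to conclude that $\bsym{b}$ alone is weakly proregular. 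Invariance of weak proregularity under permutation of the sequence is immediate from the Koszul definition and handles the ordering.

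To prove the lemma I would exploit the K\"unneth identification $\opn{K}(A; (\bsym{a}, c)^i) \cong \opn{K}(A; \bsym{a}^i) \otimes_A \opn{K}(A; c^i)$, which gives, for each $i$, a distinguished triangle
\[
\opn{K}(A; \bsym{a}^i) \xrightarrow{c^i \,\cdot\,} \opn{K}(A; \bsym{a}^i) \to \opn{K}(A; (\bsym{a}, c)^i) \to \opn{K}(A; \bsym{a}^i)[1]
\]
in $\cat{D}(\cat{Mod} A)$. The long exact cohomology sequence, natural in the Koszul transition maps $p_{j,i}$, produces short exact sequences of inverse systems
\[
0 \to \bigl\{ \mrm{H}^k(\opn{K}(A; \bsym{a}^i)) / c^i \bigr\}_i \to \bigl\{ \mrm{H}^k(\opn{K}(A; (\bsym{a}, c)^i)) \bigr\}_i \to \bigl\{ 0 :_{\mrm{H}^{k+1}(\opn{K}(A; \bsym{a}^i))} c^i \bigr\}_i \to 0.
\]
The pro-zero condition is exact on short exact sequences of inverse systems, so both directions of the lemma reduce to controlling the two outer systems for $k \leq -1$. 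For $k \leq -2$ both outer terms are subquotients of Koszul cohomologies of $\bsym{a}$ in degrees $\leq -1$, so pro-zeroness transfers directly in both directions.

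The main obstacle is the borderline degree $k = -1$, where the right-hand outer system becomes $\{(0:_{A/(\bsym{a}^i)} c^i)\}_i$ and the redundancy hypothesis $c \in (\bsym{a})$ must enter essentially. The two tools I would use are: \emph{(i)} the ideal $(\bsym{a}^i)$ annihilates $\mrm{H}^*(\opn{K}(A; \bsym{a}^i))$, a standard property of Koszul cohomology; and \emph{(ii)} the pigeonhole inclusion $(\bsym{a})^{n(i-1)+1} \subseteq (\bsym{a}^i)$, which together with $c \in (\bsym{a})$ yields $c^{n(i-1)+1} \in (\bsym{a}^i)$, so that a sufficiently high power of $c$ acts as zero both on $\mrm{H}^*(\opn{K}(A; \bsym{a}^i))$ and on $A / (\bsym{a}^i)$. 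To make these eventual annihilations fit into the pro-zero formalism I would refine to a cofinal double-indexed subsystem $\{\opn{K}(A; \bsym{a}^i) \otimes_A \opn{K}(A; c^j)\}_{(i,j)}$ in which $j$ is taken large relative to $i$; there the analogous short exact sequence stabilizes into $0 \to \mrm{H}^k(\opn{K}(A; \bsym{a}^i)) \to \mrm{H}^k(\opn{K}(A; \bsym{a}^i) \otimes \opn{K}(A; c^j)) \to \mrm{H}^{k+1}(\opn{K}(A; \bsym{a}^i)) \to 0$, and the equivalence of pro-zero conditions between the diagonal and refined systems becomes transparent. The delicate technical point, and where the bulk of the work lies, is verifying that this refined system is cofinal with the diagonal system $\{\opn{K}(A; (\bsym{a}, c)^i)\}_i$ at the level of pro-zero equivalences, and that the transition maps in the Koszul direction interact compatibly with the algebraic annihilation arguments; it is precisely the redundancy $c \in (\bsym{a})$ that forces the stabilization threshold $j \geq n(i-1)+1$ to be cofinal in the Koszul indexing and hence makes the two sides of the lemma equivalent.
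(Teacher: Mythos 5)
Your proposal is essentially correct, but it takes a genuinely different route from the paper, which in fact gives no proof of this statement at all: it quotes Schenzel \cite{Sc}, and \cite[Corollary 6.3]{PSY}, where the result is deduced from the telescope (infinite dual Koszul) characterization of weak proregularity, i.e.\ Theorem \ref{thm:503}: since that criterion is expressed through $\mrm{R}\Gamma_{\a}$, which depends only on the ideal $\a$ (indeed only on its radical), all finite generating sequences of $\a$ are weakly proregular simultaneously. Your argument instead stays entirely at the level of the inverse systems of Koszul cohomologies: permutation invariance plus an ``append/delete a redundant element'' lemma, proved from the triangle $\opn{K}(A;\bsym{a}^i) \xar{c^i} \opn{K}(A;\bsym{a}^i) \to \opn{K}(A;(\bsym{a},c)^i)$, exactness of the pro-zero condition, the standard fact that $(\bsym{a}^i)$ annihilates $\mrm{H}^*(\opn{K}(A;\bsym{a}^i))$, and the pigeonhole bound $c^{n(i-1)+1} \in (\bsym{a}^i)$. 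This is more elementary and self-contained (no injective modules, no derived functors), at the cost of bookkeeping: note in particular that on the right-hand (torsion) term of your short exact sequence of inverse systems the transition map from level $j$ to level $i$ is $c^{j-i}$ times the Koszul transition map, and this twist is precisely what makes the $k=-1$ term $\{(0:_{A/(\bsym{a}^i)} c^i)\}$ pro-zero in the appending direction.

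One intermediate claim needs repair: for $k \le -2$ pro-zeroness does not ``transfer directly in both directions''. In the deletion direction, knowing that $\{\mrm{H}^k(\opn{K}(A;\bsym{a}^i))/c^i\}$ and $\{(0:_{\mrm{H}^{k+1}(\opn{K}(A;\bsym{a}^i))} c^i)\}$ are pro-zero does not formally force $\{\mrm{H}^k(\opn{K}(A;\bsym{a}^i))\}$ to be pro-zero (for a module on which $c$ acts invertibly both outer systems vanish while the middle one need not be pro-zero); the annihilation inputs (i) and (ii) are needed for every $k \le -1$, not only at $k=-1$. Fortunately the device you introduce for the borderline degree repairs all degrees uniformly: on the cofinal region $j \ge n(i-1)+1$ of the double-indexed system the connecting maps vanish, the left outer term of the stabilized short exact sequence is exactly $\{\mrm{H}^k(\opn{K}(A;\bsym{a}^i))\}$ with its natural transition maps, and a subsystem of a pro-zero system is pro-zero; alternatively, iterate the pro-zero statement for $\{\mrm{H}^k(\opn{K}(A;\bsym{a}^i))/c^i\}$ along a chain $i \le j_1 \le j_2 \le \cdots$ until the image of $\mrm{H}^k(p_{j_t,i})$ lands in $c^{n(i-1)+1}\mrm{H}^k(\opn{K}(A;\bsym{a}^i)) = 0$. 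With that adjustment, and the routine cofinality and naturality checks you already flag, your plan does go through.
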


These theorems are repeated (with different proofs)
as \cite[Theorem 4.34]{PSY} and \cite[Corollary 6.3]{PSY} respectively.

As the next theorem shows, weak proregularity is the correct condition for the
derived torsion functor to be ``well-behaved''.
Suppose $\bsym{a}$ is a finite  sequence that generates the ideal $\a \subset
A$. Consider the infinite dual Koszul complex 
\[ \opn{K}_{\infty}^{\vee}(A; \bsym{a}) := 
\lim_{i \to} \opn{Hom}_A \bigl( \opn{K}(A; \bsym{a}^i) , A \bigr) . \]
Given a complex $M$, there is a canonical morphism 
\begin{equation} \label{eqn:455}
\mrm{R} \Gamma_{\a} (M) \to 
\opn{K}_{\infty}^{\vee}(A; \bsym{a}) \ot_A M
\end{equation}
in $\cat{D}(\cat{Mod} A)$. 

\begin{thm}[\cite{Sc}] \label{thm:503}
The sequence $\bsym{a}$ is weakly proregular iff the morphism
\tup{(\ref{eqn:455})} is an isomorphism for every $M \in \cat{D}(\cat{Mod} A)$. 
\end{thm}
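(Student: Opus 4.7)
The plan is to reduce the iso-check of (\ref{eqn:455}) to a computation on individual injective $A$-modules, and then to extract the pro-zero condition of Definition \ref{dfn:250} via a Pontryagin-style duality. Both sides of (\ref{eqn:455}) are triangulated functors in $M$. Since $\opn{K}_\infty^\vee(A; \bsym{a})$ is a filtered colimit of bounded complexes of finitely generated free modules, it is K-flat of flat dimension at most $n := \opn{length}(\bsym{a})$; hence the right-hand side preserves quasi-isomorphisms, commutes with direct sums, and respects boundedness of cohomology. The functor $\mrm{R} \Gamma_\a$ has the analogous shift and triangle compatibility via K-injective resolutions. A standard truncation argument, starting from bounded $M$ and using the finite flat dimension of $\opn{K}_\infty^\vee$ to handle unbounded K-injective resolutions, reduces the question to the case where $M = J$ is a single injective $A$-module concentrated in degree $0$.

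For such a $J$, the left-hand side is $\Gamma_\a(J)$ placed in degree $0$. On the right-hand side, the finite-freeness of each $\opn{K}(A; \bsym{a}^i)$ yields $\opn{Hom}_A(\opn{K}(A; \bsym{a}^i), A) \ot_A J \cong \opn{Hom}_A(\opn{K}(A; \bsym{a}^i), J)$, and injectivity of $J$ then gives
\begin{equation*}
\opn{H}^k \bigl( \opn{K}_\infty^\vee(A; \bsym{a}) \ot_A J \bigr)
\cong \lim_{i \to} \opn{Hom}_A \bigl( \opn{H}^{-k}(\opn{K}(A; \bsym{a}^i)), J \bigr)
\end{equation*}
(up to a shift absorbed by Koszul self-duality). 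A direct comparison identifies the degree-zero component of (\ref{eqn:455}) with the canonical identification $\Gamma_\a(J) = \lim_{i \to} \opn{Hom}_A(A/(\bsym{a}^i), J)$. Hence (\ref{eqn:455}) is an isomorphism for every injective $J$ if and only if, for every $k \leq -1$ and every injective $A$-module $J$, the direct limit $\lim_{i \to} \opn{Hom}_A \bigl( \opn{H}^k(\opn{K}(A; \bsym{a}^i)), J \bigr)$ vanishes.

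The remaining step is a purely module-theoretic duality lemma: an inverse system $\{M_i\}$ of $A$-modules is pro-zero if and only if $\lim_{i \to} \opn{Hom}_A(M_i, J) = 0$ for every injective $A$-module $J$. One direction is immediate from the definition; the other uses an injective cogenerator combined with exactness of filtered colimits. Applied to the Koszul inverse systems (\ref{eqn:200}), this pins down both implications of the theorem at once. The main obstacle I anticipate is the initial reduction from a single injective $J$ to an arbitrary, possibly unbounded, complex $M$, since unbounded K-injective resolutions interact subtly with tensor products; one must use the finite flat dimension of $\opn{K}_\infty^\vee(A; \bsym{a})$ to ensure that the hypercohomology spectral sequences converge, and that the filtered colimit defining $\opn{K}_\infty^\vee$ commutes properly with the limit implicit in the K-injective resolution of $M$.
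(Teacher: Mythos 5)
Your computational core is sound and is essentially the classical argument of Schenzel (and of \cite[Section 4]{PSY}, where this theorem is reproved): for a single injective module $J$ one has
$\opn{H}^k \bigl( \opn{K}_{\infty}^{\vee}(A; \bsym{a}) \ot_A J \bigr) \cong \lim_{i \to} \opn{Hom}_A \bigl( \opn{H}^{-k}(\opn{K}(A; \bsym{a}^i)), J \bigr)$ up to the Koszul self-duality shift, the degree-zero part is $\Gamma_{\a}(J)$, and the pro-zero condition is detected by vanishing of these colimits over all injectives. One small repair: in the nontrivial direction of your duality lemma, a fixed injective cogenerator does not obviously produce a single homomorphism nonvanishing on all the (possibly non-stabilizing) images $\opn{im}(M_j \to M_i)$; the clean argument is to take $J$ to be the injective hull of $M_i$, so that $\phi \circ p_{j,i} = 0$ with $\phi$ injective forces $p_{j,i} = 0$. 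Since your hypothesis quantifies over all injectives, this is allowed, and it settles the implication (isomorphism for all $M$) $\Rightarrow$ (weak proregularity), as well as the converse on bounded-below complexes via the column-filtration spectral sequence of an injective resolution.

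The genuine gap is the reduction from arbitrary unbounded $M$ to the injective-module case in the direction (weak proregularity) $\Rightarrow$ (isomorphism for all $M$). Your truncation argument needs to discard $\tau^{\leq j} M$ for $j \ll 0$ on \emph{both} sides of (\ref{eqn:455}); the finite flat dimension of $\opn{K}_{\infty}^{\vee}(A; \bsym{a})$ controls only the right-hand side. For the left-hand side you would need $\mrm{R}\Gamma_{\a}$ to have bounded cohomological amplitude (or to commute with the relevant homotopy colimits of brutal truncations), and neither is available at this stage --- finiteness of the cohomological dimension of $\mrm{R}\Gamma_{\a}$ is essentially a consequence of the theorem you are proving, so the bootstrap is circular. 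The standard way around this, which is how the cited proofs proceed, is to avoid truncating $M$ altogether: take a K-injective resolution $I$ of $M$ whose terms are injective modules, write
$\Gamma_{\a}(I) = \lim_{i \to} \opn{Hom}_A(A/(\bsym{a}^i), I)$ and
$\opn{K}_{\infty}^{\vee}(A; \bsym{a}) \ot_A I = \lim_{i \to} \opn{Hom}_A(\opn{K}(A; \bsym{a}^i), I)$,
so the cone of (\ref{eqn:455}) is the filtered colimit of $\opn{Hom}_A(C_i, I)$ with $C_i$ the cone of $\opn{K}(A; \bsym{a}^i) \to A/(\bsym{a}^i)$; weak proregularity says the inverse system $\{C_i\}$ has pro-zero cohomology in each of its finitely many degrees, and an induction on the (functorial) smart truncations of the $C_i$ shows that $\{\opn{Hom}_{\cat{D}(\cat{Mod} A)}(C_i, M[k])\}_i$ is then pro-zero, so the colimit vanishes. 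With that substitution for your reduction step, the proof goes through for all of $\cat{D}(\cat{Mod} A)$.
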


The following theorem, which is \cite[Theorem 1.1]{PSY}, plays a central role
in our work.

\begin{thm}[MGM Equivalence] \label{thm:201}
Let $A$ be a commutative ring, and $\a$ a weakly proregular ideal in it.
\begin{enumerate}
\item For any $M \in \cat{D}(\cat{Mod} A)$ one has
$\mrm{R} \Gamma_{\a} (M) \in \cat{D}(\cat{Mod} A)_{\a \tup{-tor}}$
and 
$\mrm{L} \Lambda_{\a} (M) \in \cat{D}(\cat{Mod} A)_{\a \tup{-com}}$.

\item The functor 
\[ \mrm{R} \Gamma_{\a} : 
\cat{D}(\cat{Mod} A)_{\a \tup{-com}} \to 
\cat{D}(\cat{Mod} A)_{\a \tup{-tor}} \]
is an equivalence, with quasi-inverse $\mrm{L} \Lambda_{\a}$. 
\end{enumerate}
\end{thm}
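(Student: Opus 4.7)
The plan is to exploit Theorem \ref{thm:503} to replace the abstract derived functors with concrete tensor-type operations. Let $\bsym{a}$ be a weakly proregular generating sequence for $\a$, and set $K^\vee := \opn{K}^\vee_\infty(A; \bsym{a})$. Theorem \ref{thm:503} gives a natural isomorphism $\mrm{R}\Gamma_\a(M) \cong K^\vee \ot_A^{\mrm{L}} M$. The first step is to establish the dual companion formula $\mrm{L}\Lambda_\a(M) \cong \mrm{RHom}_A(K^\vee, M)$ for every $M$. Given these two representations, ordinary tensor-Hom adjunction yields an adjunction $\mrm{R}\Gamma_\a \dashv \mrm{L}\Lambda_\a$ on $\cat{D}(\cat{Mod} A)$, and one identifies the canonical maps $\sigma^{\mrm{R}}_M$ and $\tau^{\mrm{L}}_M$ with the counit and unit of this adjunction.

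The second step is to prove that $\mrm{R}\Gamma_\a$ is idempotent. Under the Koszul representation this amounts to showing that the multiplication map $K^\vee \ot_A^{\mrm{L}} K^\vee \to K^\vee$ is a quasi-isomorphism. Since $K^\vee$ is a tensor product of the single-element complexes $\opn{K}^\vee_\infty(A; a_i)$, and each of these is the two-term complex $(A \to A_{a_i})$ concentrated in degrees $0$ and $1$, the claim reduces to an elementary direct computation in the one-variable case. Idempotency of $\mrm{L}\Lambda_\a$ then follows formally from $\mrm{RHom}_A(K^\vee, \mrm{RHom}_A(K^\vee, M)) \cong \mrm{RHom}_A(K^\vee \ot_A^{\mrm{L}} K^\vee, M)$ and the preceding quasi-isomorphism.

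Part (1) of the theorem is now immediate: by idempotency, $\mrm{R}\Gamma_\a$ applied again to $\mrm{R}\Gamma_\a(M)$ is an isomorphism, placing $\mrm{R}\Gamma_\a(M)$ in $\cat{D}(\cat{Mod} A)_{\a\tup{-tor}}$, and symmetrically for $\mrm{L}\Lambda_\a$. For part (2) I would invoke the following categorical principle: given an adjoint pair $F \dashv G$ of triangulated endofunctors with $F \circ F \cong F$ and $G \circ G \cong G$ compatibly with the counit and unit, the essential image of $F$ coincides with $\{M : \epsilon_M \text{ iso}\}$, the essential image of $G$ coincides with $\{N : \eta_N \text{ iso}\}$, and $F, G$ restrict to mutually quasi-inverse equivalences between these subcategories. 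Applying this with $F = \mrm{R}\Gamma_\a$ and $G = \mrm{L}\Lambda_\a$ delivers the desired equivalence.

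The hard part is really the first step: establishing $\mrm{L}\Lambda_\a(-) \cong \mrm{RHom}_A(K^\vee, -)$ and, more delicately, identifying the canonical maps $\sigma^{\mrm{R}}$ and $\tau^{\mrm{L}}$ (which come from the very definitions of the derived torsion and completion functors) with the adjunction unit and counit built out of the Koszul complex representations. Once this identification is accomplished, the argument is entirely formal: idempotency plus adjunction plus the general categorical principle.
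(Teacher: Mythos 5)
Two preliminary remarks. First, the paper itself gives no proof of Theorem \ref{thm:201}: it is quoted from \cite[Theorem 1.1]{PSY}, with the remark that slightly weaker versions are \cite[Theorem (0.3)$^*$]{AJL} and \cite[Theorem 4.5]{Sc}. So the honest comparison is with the proofs in those sources, and your outline does follow their general route: represent both derived functors by Koszul/telescope-type complexes, then conclude by an essentially formal idempotency-plus-adjunction argument.

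The genuine gap is your ``first step''. The formula $\mrm{L} \La_{\a}(M) \cong \mrm{RHom}_A(\opn{K}^{\vee}_{\infty}(A;\bsym{a}), M)$ is not a routine dual companion of Theorem \ref{thm:503}; it is Greenlees--May duality (essentially \cite[Theorem (0.3)]{AJL}, \cite[Theorem 4.5]{Sc}, \cite[Section 5]{PSY}), a statement of the same depth as the theorem you are proving. Its proof is where weak proregularity enters on the completion side, and it occupies the bulk of the cited papers: one must replace $\opn{K}^{\vee}_{\infty}(A;\bsym{a})$, which is a complex of flat but non-projective modules, by the K-projective telescope complex $\opn{Tel}(A;\bsym{a})$ (also needed to even define your adjunction at the derived level), and then compare $\La_{\a}$ of a K-flat complex with $\opn{Hom}_A(\opn{Tel}(A;\bsym{a}),-)$ via pro-zero/Mittag--Leffler arguments, including the identification of the canonical morphism $\tau^{\mrm{L}}$ with the obvious map in this model. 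Since you explicitly defer exactly this step, the proposal reduces the theorem to the statement whose proof is the theorem's content. A secondary inaccuracy: $\sigma^{\mrm{R}}_M : \mrm{R}\Ga_{\a}(M) \to M$ and $\tau^{\mrm{L}}_M : M \to \mrm{L}\La_{\a}(M)$ are not the counit and unit of $\mrm{R}\Ga_{\a} \dashv \mrm{L}\La_{\a}$ (those have the shape $\mrm{R}\Ga_{\a}\mrm{L}\La_{\a} \to \bsym{1}$ and $\bsym{1} \to \mrm{L}\La_{\a}\mrm{R}\Ga_{\a}$). What the equivalence in part (2) actually requires, beyond idempotency of each functor separately, is that $\mrm{L}\La_{\a}(\sigma^{\mrm{R}}_M)$ and $\mrm{R}\Ga_{\a}(\tau^{\mrm{L}}_M)$ are isomorphisms; equivalently, that $\mrm{R}\Ga_{\a}(X) = 0$ forces $\mrm{L}\La_{\a}(X) = 0$. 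This does follow once both representations are in hand (e.g.\ writing $\mrm{RHom}_A(\opn{K}^{\vee}_{\infty}(A;\bsym{a}),X)$ as a homotopy limit of the complexes $\opn{K}(A;\bsym{a}^i) \ot_A X$ and using $\opn{K}(A;\bsym{a}^i) \ot_A \opn{K}^{\vee}_{\infty}(A;\bsym{a}) \simeq \opn{K}(A;\bsym{a}^i)$), but it is not a consequence of ``adjunction plus idempotency'' in the abstract, so your final ``entirely formal'' step needs this argument made explicit as well.
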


\begin{rem}
Slightly weaker versions of Theorem \ref{thm:201} appeared previously; they are
\cite[Theorem (0.3)$^*$]{AJL} and \cite[Theorem 4.5]{Sc}.
The difference is that in these earlier results it was assumed that the ideal
$\a$ is generated by a sequence $(a_1, \ldots, a_n)$ that is weakly proregular,
and moreover each $a_i$ has bounded torsion. This extra condition certainly
holds when $A$ is noetherian. 

For the sake of convenience, in the present paper we quote \cite{PSY} regarding
derived completion and torsion. It is tacitly understood that in the
noetherian case the results of \cite{AJL} and \cite{Sc} suffice.
\end{rem}

\section{The Derived Double Centralizer} \label{sec:der-end}

In this section we define the derived double centralizer of a DG module. See
Remarks \ref{rem:401} and \ref{rem:400} for a discussion of this concept and
related literature.

Let $\K$ be a commutative ring, and let $A = \bigoplus_{i \in \Z} A^i$
be a DG $\K$-algebra (associative and unital, but not necessarily commutative).
Given left DG  $A$-modules $M$ and $N$, we denote by 
$\opn{Hom}^i_{A}(M, N)$ the $\K$-module of  $A$-linear homomorphisms of degree
$i$. We get a DG $\K$-module 
\[ \opn{Hom}_{A}(M, N) := \bigoplus_{i \in \Z} \, \opn{Hom}^i_{A}(M, N)  \]
with the usual differential.

The object 
$\opn{End}_{A}(M) := \opn{Hom}_{A}(M, M)$
is a DG $\K$-algebra.
Since the left actions of $A$ and $\opn{End}_{A}(M)$ on $M$ commute, we see
that $M$ is a left DG module over the DG algebra
$A \ot_{\K} \opn{End}_{A}(M)$. 

The category of left DG $A$-modules is denoted by $\cat{DGMod} A$.
The set of morphisms $\opn{Hom}_{\cat{DGMod} A}(M, N)$ is 
precisely the set of $0$-cocycles in the DG $\K$-module $\opn{Hom}_A(M, N)$.
Note that $\cat{DGMod} A$ is a $\K$-linear abelian category.

Let $\til{\cat{K}}(\cat{DGMod} A)$ be the homotopy category of
$\cat{DGMod} A$, so that 
\[ \opn{Hom}_{\til{\cat{K}}(\cat{DGMod} A)}(M, N) = 
\opn{H}^0 \bigl( \opn{Hom}_A(M, N) \bigr) . \]
The derived category $\til{\cat{D}}(\cat{DGMod} A)$ is gotten by inverting 
the quasi-isomorphisms in $\til{\cat{K}}(\cat{DGMod} A)$.
The categories $\til{\cat{K}}(\cat{DGMod} A)$  and 
$\til{\cat{D}}(\cat{DGMod}A)$ are $\K$-linear and triangulated.
If $A$ happens to be a ring (i.e.\ $A^i = 0$ for $i \neq 0$) then 
$\cat{DGMod} A = \cat{C}(\cat{Mod} A)$,
the category of complexes  in $\cat{Mod} A$, and 
$\til{\cat{D}}(\cat{DGMod} A) = \cat{D}(\cat{Mod} A)$,
the usual derived category.

For $M, N \in \cat{DGMod}A$ we define 
\[ \opn{Ext}^i_A(M, N) := \opn{Hom}_{\til{\cat{D}}(\cat{DGMod} A)}(M, N[i]) \]
and 
\[ \opn{Ext}_A(M, N) := \bigoplus_{i \in \Z} \, \opn{Ext}^i_A(M, N) . \]

\begin{dfn} \label{dfn:300}
Let $A$ be a DG $\K$-algebra and $M \in \cat{DGMod}A$.
Define 
\[ \opn{Ext}_A(M) := \opn{Ext}_A(M, M) . \]
This is a graded $\K$-algebra with the Yoneda multiplication (i.e.\ composition
of morphisms in $\til{\cat{D}}(\cat{DGMod} A)$). We call $\opn{Ext}_A(M)$ the
{\em Ext algebra} of $M$.
\end{dfn}

There is a canonical homomorphism of graded $\K$-algebras 
$\opn{H}(\opn{End}_A(M)) \to \lb \opn{Ext}_A(M)$. If $M$ is either K-projective
or K-injective, then this homomorphism is bijective. 

\begin{dfn} \label{dfn:310}
Let $A$ be a DG $\K$-algebra and $M$ a DG $A$-module. Choose a K-projective
resolution $P \to M$ in $\cat{DGMod} A$. The DG $\K$-algebra 
$B := \opn{End}_A(P)$ is called a {\em derived endomorphism DG algebra} of $M$.
\end{dfn}

Note that there are isomorphisms of graded $\K$-algebras
$\opn{H}(B) \cong \opn{Ext}_A(P) \cong \opn{Ext}_A(M)$.
The dependence of the derived endomorphism DG algebra $B = \opn{End}_A(P)$ on
the resolution $P \to M$ is explained in the next proposition. 

\begin{prop} \label{prop:17}
Let $M$ be a DG $A$-module, and let $P \to M$ and $P' \to M$ be 
K-projective resolutions in $\cat{DGMod} A$. Define
$B := \opn{End}_{A}(P)$ and $B' := \opn{End}_{A}(P')$. Then there is a 
DG $\K$-algebra $B''$, and a DG $B''$-module $P''$, 
with DG $\K$-algebra quasi-isomorphisms
$B'' \to B$ and $B'' \to B'$, and DG $B''$-module quasi-iso\-mor\-phisms
$P'' \to P$ and $P'' \to P'$. 
\end{prop}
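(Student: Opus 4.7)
The plan is to exploit the K-projectivity of $P$ and $P'$ to produce a common ``derived'' endomorphism DG algebra $B''$ fitting into a zigzag of strict quasi-isomorphisms to $B$ and $B'$. First, I would lift the identity of $M$ to a DG $A$-module homomorphism $\phi \colon P \to P'$. This lift exists because $P$ is K-projective and $P' \to M$ is a quasi-isomorphism, and $\phi$ is itself a quasi-isomorphism since both $P \to M$ and $P' \to M$ are. Moreover, because $P$ and $P'$ are both K-projective, $\phi$ is a genuine homotopy equivalence; I fix a chain-homotopy inverse $\psi \colon P' \to P$ for later use.

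Next, I would manufacture $P''$ as an intermediate DG $A$-module admitting DG $A$-linear quasi-isomorphisms $\pi \colon P'' \to P$ and $\pi' \colon P'' \to P'$ with $\pi' = \phi \circ \pi$ on the nose. A clean choice is the path object $P'' := \opn{Cocyl}(\phi)$: as a graded $A$-module, $P'' = P \oplus P'[-1] \oplus P'$, with differential
\[
d(p, h, p') = \bigl( dp,\; dh - (-1)^{|p|} (p' - \phi(p)),\; dp' \bigr).
\]
The projections $\pi \colon (p, h, p') \mapsto p$ and $\pi' \colon (p, h, p') \mapsto p'$ are split surjections and homotopy equivalences. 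I would then define the DG algebra $B''$ as the subalgebra of compatible triples in $B \times \opn{End}_A(P'') \times B'$,
\[
B'' := \bigl\{ (f, F, g) : \pi \circ F = f \circ \pi \text{ and } \pi' \circ F = g \circ \pi' \bigr\} ,
\]
equipped with componentwise differential and multiplication. Then $P''$ is a DG $B''$-module via the middle projection, and by construction $\pi$ and $\pi'$ are $B''$-linear over the outer projections $B'' \to B$ and $B'' \to B'$.

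The heart of the argument is to verify that the outer projections $B'' \to B$ and $B'' \to B'$ are quasi-isomorphisms. The conceptual picture is that $B''$ realizes the homotopy pullback of the cospan of quasi-isomorphisms $B \to \opn{Hom}_A(P, P') \leftarrow B'$ given by $f \mapsto \phi \circ f$ and $g \mapsto g \circ \phi$; both legs are quasi-isomorphisms because $\phi$ is a homotopy equivalence of K-projectives (with explicit homotopy inverses supplied by pre- and post-composition with $\psi$). To make this rigorous at the strict level, I would either exhibit $B''$ as the kernel of a degree-wise surjective chain map between suitable DG $\K$-modules and extract the required cohomological isomorphisms from the resulting long exact sequence, or first replace one leg of the cospan by a mapping-cylinder DG algebra so that the strict pullback coincides with the homotopy pullback. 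The main obstacle is this bookkeeping step, together with the verification of associativity, the Leibniz rule, and the compatibility of the $B''$-module structure on $P''$ with the two projections; none of it is conceptually deep, but the signs must be chosen with care.
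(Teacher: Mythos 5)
Your overall architecture (lift the identity of $M$ to a homotopy equivalence $\phi$, build an intermediate module $P''$ mapping to both $P$ and $P'$, and cut out $B''$ inside $B\times\opn{End}_A(P'')\times B'$ by strict compatibility with $\pi,\pi'$) is a reasonable alternative to the paper's construction, and the formal part works: $P''$ is a DG $B''$-module via the middle projection, and $\pi,\pi'$ are $B''$-linear over the two outer projections. (A small slip: for the cocylinder one has $\pi'\simeq\phi\circ\pi$ only up to the homotopy stored in the middle component, not ``on the nose''; this is harmless.) But the heart of the proposition is precisely the claim you defer as ``bookkeeping'': that the outer projections $B''\to B$ and $B''\to B'$ are quasi-isomorphisms. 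That is a genuine gap, not a routine verification. Your $B''$ is a \emph{strict} pullback-type subalgebra, and strict pullbacks along the maps induced by $\pi$ and $\pi'$ need not compute the homotopy pullback here: the natural way to run your first fix is to present $B''$ as the kernel of $(f,F,g)\mapsto(f\pi-\pi F,\,g\pi'-\pi'F)$ and use the long exact sequence, but degreewise surjectivity of this map requires lifting graded maps against $(\pi,\pi'):P''\to P\oplus P'$, which is an honest lifting problem because K-projective complexes need not have projective components (the proposition is stated for arbitrary K-projective resolutions). Indeed $\pi$ does admit a strict DG section $p\mapsto(p,0,\phi(p))$, so the one-sided condition $\pi F=f\pi$ behaves well, but $\pi'$ has no evident strict section, and your homotopy inverse $\psi$ only controls things up to homotopy, which is exactly what the strict equations in $B''$ do not see. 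Your second fix (``replace one leg of the cospan by a mapping-cylinder DG algebra'') is not a construction you can invoke off the shelf in the category of DG algebras, and any replacement of $B''$ must be re-checked for compatibility with the $B''$-action on $P''$ and the two projections--so it does not discharge the obligation either.

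For comparison, the paper's proof is engineered to avoid every such lifting issue: it takes $L:=\opn{cone}(\phi)$, which is acyclic and K-projective, and defines $B''$ as the upper-triangular subalgebra $\sbmat{B & Q \\ 0 & B'}\subset\opn{End}_A(L)$ with $Q=\opn{Hom}_A(P'[1],P)$. The two projections are then visibly surjective, with kernels $\opn{Hom}_A(P'[1],L)$ and $\opn{Hom}_A(L,P)$, which are acyclic simply because $L$ is acyclic (resp.\ acyclic and K-projective, hence contractible); no lifting of graded maps is ever needed. The module-level statement is also handled differently: rather than producing strict quasi-isomorphisms from an intermediate complex directly, the paper restricts $P$ and $P'$ to $B''$, uses the acyclicity of $L$ to get an isomorphism $P'\cong P$ in $\til{\cat{D}}(\cat{DGMod}(B''))$, and then takes a K-projective resolution $P''\to P'$ over $B''$ to realize both maps as genuine quasi-isomorphisms. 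If you want to salvage your route, you would need to prove the quasi-isomorphism claim for your strict $B''$ (for instance by finding a replacement of the cospan for which the strict equations do admit solutions degreewise), and as it stands that key step is missing.
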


\begin{proof}
Choose a quasi-isomorphism $\phi : P' \to P$ in 
$\cat{DGMod} A$ lifting the given quasi-isomorphisms to
$M$; this can be done of course. Let 
$L := \opn{cone}(\phi) \in \lb \cat{DGMod}A$, the mapping cone of $\phi$. So
as graded $A$-module 
$L =  P \oplus P'[1] = \sbmat{P \\ P'[1]}$; and the differential is
$\d_L = \sbmat{\d_P & \phi \\ 0 & \d_{P'[1]}}$, where $\phi$ is viewed as a
degree $1$ homomorphism $P'[1] \to P$. Of course $L$ is an acyclic DG module.

Take $Q := \opn{Hom}_A(P'[1], P)$, and let $B''$ be the triangular
matrix graded algebra
$B'' := \sbmat{B & Q \\ 0 & B'}$
with the obvious matrix multiplication.
This makes sense because there is a canonical isomorphism of DG algebras
$B' \cong \opn{End}_{A}(P'[1])$. 
Note that $B''$ is a subalgebra of $\opn{End}_{A}(L)$.
We make $B''$ into a DG algebra with differential 
$\d_{B''} := \d_{\opn{End}_{A}(L)}|_{B''}$.
The projections $B'' \to B$ and $B'' \to B'$ on the diagonal entries are DG
algebra  quasi-isomorphisms, because their kernels are the acyclic complexes 
$\opn{Hom}_{A}(P'[1], L)$ and $\opn{Hom}_{A}(L, P)$ respectively. 

Now under the restriction functor 
$\cat{DGMod}(B) \to \cat{DGMod}(B'')$ we have 
$P \mapsto \sbmat{P \\ 0}$, and likewise
$P' \mapsto \sbmat{0 \\ P'}$. 
Consider the exact sequence 
\[ 0 \to  \sbmat{P \\ 0} \to L \to \sbmat{0 \\ P'[1]} \to 0 \]
in $\cat{DGMod} (B'')$. 
There is an induced distinguished triangle 
$\sbmat{0 \\ P'} \xar{\chi} \sbmat{P \\ 0} \to L \distri$
in $\til{\cat{D}}(\cat{DGMod} (B''))$. But $L$ is acyclic, so $\chi$ is an
isomorphism.

Finally let us choose a K-projective resolution 
$P'' \to \sbmat{0 \\ P'}$ in $\cat{DGMod} (B'')$. 
Then $\chi$ induces a quasi-isomorphism 
$P'' \to \sbmat{P \\ 0}$ in $\cat{DGMod} (B'')$. 
\end{proof}

\begin{cor} \label{cor:300}
In the situation of Proposition \tup{\ref{prop:17}}, there is an isomorphism of
graded $\K$-algebras 
\[ \opn{Ext}_B(P) \cong \opn{Ext}_{B'}(P') . \]
\end{cor}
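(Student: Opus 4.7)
The plan is to promote Proposition \ref{prop:17} to the level of derived categories by invoking the standard fact (to be used in Section \ref{sec:der-morita} on derived Morita theory) that a quasi-isomorphism of DG algebras $f : B'' \to B$ induces a triangulated equivalence
\[ f_* : \til{\cat{D}}(\cat{DGMod} B) \iso \til{\cat{D}}(\cat{DGMod} B'') \]
via restriction of scalars. Applying this twice, for the quasi-isomorphisms $B'' \to B$ and $B'' \to B'$ supplied by Proposition \ref{prop:17}, gives us a chain of triangulated equivalences
\[ \til{\cat{D}}(\cat{DGMod} B) \iso \til{\cat{D}}(\cat{DGMod} B'') \iso \til{\cat{D}}(\cat{DGMod} B'). \]

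Next I would track the objects. Under restriction, the $B$-module $P$ becomes a $B''$-module, and by Proposition \ref{prop:17} the quasi-isomorphism $P'' \to P$ of DG $B''$-modules shows that $P$ and $P''$ are isomorphic in $\til{\cat{D}}(\cat{DGMod} B'')$. Symmetrically, $P'$ is isomorphic to $P''$ in $\til{\cat{D}}(\cat{DGMod} B'')$. Therefore the composed equivalence $\til{\cat{D}}(\cat{DGMod} B) \iso \til{\cat{D}}(\cat{DGMod} B')$ carries $P$ to an object isomorphic to $P'$.

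Finally, any triangulated equivalence of $\K$-linear derived categories sending $P$ to (an object isomorphic to) $P'$ induces, via its action on Hom-sets, a bijection $\opn{Ext}^i_B(P, P[i]) \iso \opn{Ext}^i_{B'}(P', P'[i])$ for each $i$, which is compatible with composition and with the shift functor. Assembling over all $i$ therefore yields the desired isomorphism of graded $\K$-algebras
\[ \opn{Ext}_B(P) \cong \opn{Ext}_{B'}(P'), \]
preserving Yoneda multiplication.

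The only mildly delicate point is justifying that restriction along a DG algebra quasi-isomorphism is a derived equivalence; this is a standard result of Keller but relies on using K-projective (or K-flat) resolutions to construct the quasi-inverse $B \ot^{\mrm{L}}_{B''} (-)$. Once this input is granted, the argument reduces to diagram-chasing as above, and everything else (the identification of $P, P', P''$ in the middle derived category, and multiplicativity of the resulting bijections) is formal.
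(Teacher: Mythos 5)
Your argument is correct and follows essentially the same route as the paper: both exploit that restriction along the DG algebra quasi-isomorphisms $B'' \to B$ and $B'' \to B'$ gives triangulated equivalences, and then use the quasi-isomorphisms $P'' \to P$ and $P'' \to P'$ to identify all three Ext algebras (the paper compares $\opn{Ext}_B(P)$ and $\opn{Ext}_{B'}(P')$ with $\opn{Ext}_{B''}(P'')$ in the middle category rather than composing the equivalences, but this is only a cosmetic difference). The point you flag as delicate -- that restriction along a quasi-isomorphism is a derived equivalence, with quasi-inverse given by derived extension of scalars -- is exactly the standard input the paper also relies on.
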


\begin{proof}
Since $B'' \to B$ is a quasi-isomorphism of DG algebras, it follows that the
restriction functor
$\til{\cat{D}}( \cat{DGMod}(B)) \to 
\til{\cat{D}}(\cat{DGMod} (B''))$
is an equivalence of triangulated categories. Therefore we get an
induced  isomorphism of graded $\K$-algebras 
$\opn{Ext}_B(P) \iso \opn{Ext}_{B''}(P'')$. 
Similarly we get a graded $\K$-algebra isomorphism
$\opn{Ext}_{B'}(P') \iso \opn{Ext}_{B''}(P'')$. 
\end{proof}

\begin{dfn} \label{dfn:303}
Let $M$ be a DG $A$-module, and let $P \to M$ be a K-projective resolution in 
$\cat{DGMod}A$. The graded $\K$-algebra $\opn{Ext}_{B}(P)$ is called a 
{\em derived double centralizer} of $M$. 
\end{dfn}

\begin{rem} \label{rem:401}
The uniqueness of the graded $\K$-algebra $\opn{Ext}_{B}(P)$ provided by
Corollary \ref{cor:300} is sufficient for the purposes of this paper (see
Theorem \ref{thm:53}). 

It is possible to show by a more detailed calculation that the isomorphism
provided by Corollary \ref{cor:300} is in fact canonical (it does not depend on
the choices made in the proof of Proposition \ref{prop:17}, e.g.\ the
quasi-isomorphism $\phi$). 

Let us choose a K-projective resolution $Q \to P$ in $\cat{DGMod}(B)$, and
define the DG algebra $C := \opn{End}_B(Q)$. Then $C$ should be called a
{\em double endomorphism DG algebra} of $M$. Of course 
$\opn{H}(C) \cong \opn{Ext}_{B}(P)$ as graded algebras. There should be a
canonical DG algebra homomorphism $A \to C$. 

We tried to work out a comprehensive treatment of derived
endomorphism algebras and their iterates, using the old-fashioned methods,
and did not get very far (hence it is not included in the paper). We
expect that a full treatment is only possible in terms of
$\infty$-categories. 
\end{rem}

\begin{rem} \label{rem:400}
Derived endomorphism DG algebras (and the double derived ones) were treated in
several earlier the papers, including \cite{DGI}, \cite{Jo} and \cite{Ef}.
These papers do not mention any uniqueness properties of these DG algebras;
indeed, as far as we can tell, they just pick a convenient resolution $P \to M$,
and work with the DG algebra $\opn{End}_{A}(P)$. Cf.\ Subsection 1.5 of
\cite{DGI} where this issue is briefly discussed.

The most detailed treatment of derived endomorphism DG algebras that we know is
in Keller's paper \cite{Ke}. In \cite[Section 7.3]{Ke} the concept of a {\em
lift} of a DG module is introduced. The pair $(B, P)$ from Definition
\ref{dfn:310} is called a {\em standard lift} in \cite{Ke}. It is proved that
lifts are unique up to quasi-isomorphism (this is basically what is done in our
Proposition \ref{prop:17}); but there is no statement regarding uniqueness of
these quasi-isomorphisms. Also there is no discussion of derived double
centralizers.
\end{rem}

\section{Supplement on Derived Morita Equivalence} \label{sec:der-morita} 

Derived Morita theory goes back to Rickard's paper [Ri], which dealt with rings
and two-sided tilting complexes. 
Further generalizations can be found in \cite{Ke, BV, Jo}.
For our purposes (in Section \ref{sec:ddc}) we need to know certain precise
details about derived Morita equivalence in the case of DG algebras and compact
generators (specifically, formula (\ref{eqn:451})
for the functor $F$ appearing in Theorem \ref{thm:34}); and
hence we give the full proof here. 

Let $\cat{E}$ be a triangulated category with infinite direct sums. 
Recall that an object $M \in \cat{E}$ is called {\em compact} (or small) if 
for any collection $\{ N_z \}_{z \in Z}$ of objects of $\cat{E}$, the 
canonical homomorphism
\[ \bigoplus_{z \in Z} \, \opn{Hom}_{\cat{E}}(M, N_z) \to 
\opn{Hom}_{\cat{E}} \Bigl( M, \boplus_{z \in Z} \, N_z
\Bigr) \]
is bijective.
The object $M$ is called {\em generator of $\cat{E}$} if for any nonzero
object $N \in \cat{E}$ there is some $i \in \Z$ such that 
$\opn{Hom}_{\cat{E}}(M, N[i]) \neq 0$.

As in Section \ref{sec:der-end} we consider a commutative ring $\K$ and a DG
$\K$-algebra $A$. The next lemma seems to be known, but we could not find
a reference.

\begin{lem} \label{lem:270}
Let $\cat{E}$ be a triangulated category with infinite direct sums, let 
$F, G : \til{\cat{D}}(\cat{DGMod} A) \to \cat{E}$ be
triangulated functors that commute with infinite direct sums,
and let $\eta : F \to G$
be a morphism of triangulated functors. Assume that
$\eta_A : F (A) \to G (A)$
is an isomorphism. Then $\eta$ is an isomorphism. 
\end{lem}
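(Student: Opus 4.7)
The plan is the standard devissage argument: show that the class of objects on which $\eta$ is an isomorphism is a localizing subcategory containing the generator $A$. Let me set $\cat{F}$ to be the full subcategory of $\til{\cat{D}}(\cat{DGMod} A)$ consisting of those $N$ for which $\eta_N : F(N) \to G(N)$ is an isomorphism in $\cat{E}$. The goal is to prove $\cat{F} = \til{\cat{D}}(\cat{DGMod} A)$.

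First I would verify that $\cat{F}$ is a triangulated subcategory. Closure under the shift follows immediately from the fact that $\eta$, being a morphism of triangulated functors, commutes with the translation: $\eta_{N[1]}$ is identified with $\eta_N[1]$. For closure under cones, take a distinguished triangle $N' \to N \to N'' \distri$ in $\til{\cat{D}}(\cat{DGMod} A)$ with $N', N'' \in \cat{F}$; applying $F$ and $G$ produces distinguished triangles in $\cat{E}$, and $\eta$ yields a morphism between them. Since $\eta_{N'}$ and $\eta_{N''}$ are isomorphisms, a routine application of the triangulated version of the five lemma forces $\eta_N$ to be an isomorphism, so $N \in \cat{F}$.

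Next I would show that $\cat{F}$ is closed under arbitrary direct sums. Given a family $\{N_z\}_{z \in Z}$ in $\cat{F}$, the hypothesis that both $F$ and $G$ commute with infinite direct sums implies that $\eta_{\bigoplus N_z}$ is identified, via the canonical comparison isomorphisms, with $\bigoplus \eta_{N_z}$, which is an isomorphism because each $\eta_{N_z}$ is.

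Finally, we have $A \in \cat{F}$ by hypothesis. The main step, which I would invoke as a standard fact from the theory of compactly generated triangulated categories, is that $A$ is a compact generator of $\til{\cat{D}}(\cat{DGMod} A)$, and the smallest triangulated subcategory containing $A$ and closed under arbitrary direct sums is the whole category $\til{\cat{D}}(\cat{DGMod} A)$. (Concretely: if $N$ is not in this localizing subcategory, one can use the generator property $\opn{Hom}(A[i], N) \neq 0$ for some $i$ to produce a nonzero map from a direct sum of shifts of $A$ to $N$; iterating and taking a homotopy colimit yields a resolution showing $N$ belongs to the localizing subcategory.) Applying this to our $\cat{F}$ gives $\cat{F} = \til{\cat{D}}(\cat{DGMod} A)$, completing the proof. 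The only nontrivial input is this last ``compact generator implies localizing generator'' fact; everything else is formal manipulation with triangulated functors.
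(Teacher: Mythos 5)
Your proposal is correct, and it is essentially the same d\'evissage as the paper's, just packaged differently. You reduce the lemma to the abstract statement that $A$ is a compact generator of $\til{\cat{D}}(\cat{DGMod} A)$ and that the smallest triangulated subcategory containing $A$ and closed under arbitrary direct sums is the whole category, after checking that the full subcategory $\cat{F}$ of objects where $\eta$ is invertible is triangulated (five lemma for triangles), closed under coproducts (using that both $F$ and $G$ commute with them and that $\eta$ is compatible with the canonical comparison maps), and contains $A$. The paper instead carries out the generation argument by hand: $\eta$ is invertible on free DG modules, then on semi-free ones via the filtration coming from a semi-basis together with the telescope triangle of B\"okstedt--Neeman, and finally on arbitrary $M$ via a semi-free (K-projective) resolution. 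These are the same mathematics: the standard ``compact generator generates as a localizing subcategory'' theorem you invoke is proved precisely by the semi-free/telescope (or the closely related cell-attachment and homotopy colimit) construction, so your route buys brevity and modularity at the cost of an external reference, while the paper's version is self-contained --- which is deliberate, since the authors state they could not find a reference for this lemma. Two small points of hygiene: your parenthetical sketch of the generation fact is phrased as a contradiction (``if $N$ is not in this localizing subcategory\dots'') but the construction you describe directly exhibits $N$ as an object of the localizing subcategory, with compactness of $A$ needed to pass $\opn{Hom}(A,-)$ through the homotopy colimit and the generator property needed to kill the cone; and note that here compactness of $A$ is automatic ($\opn{Hom}_{\til{\cat{D}}(\cat{DGMod} A)}(A, N[i]) \cong \mrm{H}^i(N)$), so it is not an extra hypothesis you are smuggling in. Neither point affects the validity of your argument.
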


\begin{proof}
Suppose we are given a distinguished triangle
$M' \to M \to M'' \distri$
in $\til{\cat{D}}(\cat{DGMod} A)$, such that two of the three morphisms 
$\eta_{M'}$, $\eta_{M}$ and $\eta_{M''}$ are isomorphisms. Then the third is
also an isomorphism. 

Since both functors $F, G$ commute with shifts and direct sums, and since
$\eta_A$ is an isomorphism, it follows that 
$\eta_P$ is an isomorphism for any free DG $A$-module $P$. 

Next consider a semi-free DG module $P$. 
Choose any semi-basis $Z = \bigcup_{j \geq 0} Z_j$ of $P$. This gives rise to
an exhaustive ascending 
filtration $\{ P_j \}_{j \geq -1}$ of $P$ by DG submodules,
with $P_{-1} = 0$. For
every $j$ we have a distinguished triangle
\[ P_{j-1} \xar{\theta_j} P_j \to P_j / P_{j-1} \distri \]
in $\til{\cat{D}}(\cat{DGMod} A)$, where 
$\theta_j : P_{j-1} \to P_j$ is the inclusion. 
Since $P_j / P_{j-1}$ is a free DG module, by
induction we conclude that $\eta_{P_j}$ is an isomorphism for every $j$. 
The telescope construction (see \cite[Remark 2.2]{BN}) gives a distinguished
triangle
\[ \bigoplus_{j \in \N} P_j \xar{\ \Theta \ } \bigoplus_{j \in \N} P_j \to 
P  \distri , \]
with
\[ \Theta|_{P_{j-1}} := (\bsym{1}_{}, - \theta_j) : P_{j-1} \to
P_{j-1} \oplus P_{j} . \]
This shows that $\eta_P$ is an isomorphism. 

Finally, any DG module $M$ admits a quasi-isomorphism 
$P \to M$ with $P$ semi-free. Therefore $\eta_M$ is an isomorphism.
\end{proof}

Let $\cat{E}$ be a  be a full triangulated subcategory of 
$\til{\cat{D}}(\cat{DGMod} A)$ which is closed under infinite direct sums, and
let $M \in \cat{E}$.  Fix a K-projective resolution 
$P \to M$ in $\cat{DGMod}A$, and let 
$B := \opn{End}_A(P)$. So $B$ is a derived endomorphism DG algebra of $M$
(Definition \ref{dfn:310}).
Since $P \in \cat{DGMod} A \ot_{\K} B$, there is a triangulated functor 
\begin{equation} \label{eqn:450}
G :  \til{\cat{D}}(\cat{DGMod} B^{\mrm{op}}) \to
\til{\cat{D}}(\cat{DGMod} A) \ , \ 
G(N) := N \ot_B^{\mrm{L}} P
\end{equation}
which is calculated by K-flat resolutions in $\cat{DGMod} B^{\mrm{op}}$.
(Warning: $P$ is usually not K-flat over $B$.)
The functor $G$ commutes with infinite direct sums, and 
$G(B) \cong P \cong M$
in $\til{\cat{D}}(\cat{DGMod} A)$.
Therefore $G(N) \in \cat{E}$ for every
$N \in \til{\cat{D}}(\cat{DGMod} B^{\mrm{op}})$. 

Because $P$ is K-projective over $A$, there is a triangulated functor 
\begin{equation} \label{eqn:451}
F :  \til{\cat{D}}(\cat{DGMod} A) \to
\til{\cat{D}}(\cat{DGMod} B^{\mrm{op}})  \ , \ 
F(L) := \opn{Hom}_A(P, L) . 
\end{equation}
We have $F(M) \cong F(P) \cong B$ in 
$\til{\cat{D}}(\cat{DGMod} B^{\mrm{op}})$.

\begin{lem} \label{lem:450}
The functor 
$F|_{\cat{E}} : \cat{E} \to \til{\cat{D}}(\cat{DGMod} B^{\mrm{op}})$
commutes with infinite direct sums if and only if
$M$ is a compact object of $\cat{E}$.
\end{lem}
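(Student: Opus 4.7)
The plan is to reduce the question of whether $F$ commutes with direct sums to a cohomological statement, where the compactness of $M$ at degree zero appears by inspection.

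First, I would record the key identification: since $P$ is K-projective over $A$ and $P \cong M$ in $\til{\cat{D}}(\cat{DGMod} A)$, for every $L \in \til{\cat{D}}(\cat{DGMod} A)$ and every $i \in \Z$ there is a natural identification
\[ \opn{H}^i(F(L)) \;=\; \opn{H}^i \bigl( \opn{Hom}_A(P, L) \bigr) \;\cong\; \opn{Hom}_{\til{\cat{D}}(\cat{DGMod} A)}(M, L[i]), \]
because Hom-complexes out of a K-projective compute derived Hom. In particular, the $\K$-module $\opn{H}^i(F(L))$ depends, as an abelian group, only on $M$ and $L$ in the derived category.

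Second, given any family $\{ N_z \}_{z \in Z}$ of objects of $\cat{E}$, I would consider the canonical morphism
\[ \phi \,:\, \boplus_z F(N_z) \,\to\, F \bigl( \boplus_z N_z \bigr) \]
in $\til{\cat{D}}(\cat{DGMod} B^{\mrm{op}})$, which is an isomorphism iff it is a quasi-iso\-morphism. Using the standard facts that arbitrary direct sums in $\til{\cat{D}}(\cat{DGMod} B^{\mrm{op}})$ are computed coordinate-wise on DG modules and that cohomology commutes with direct sums, and combining with the identification from step one, the map induced by $\phi$ on $\opn{H}^i$ is naturally identified with the canonical comparison
\[ \boplus_z \opn{Hom}_{\cat{E}}(M, N_z[i]) \,\to\, \opn{Hom}_{\cat{E}} \bigl( M, \boplus_z N_z[i] \bigr). \]

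Finally, I would close the argument by observing that this comparison is an isomorphism for every $i$ and every family in $\cat{E}$ iff $M$ is compact in $\cat{E}$: the forward implication specializes to $i = 0$; the reverse uses that $\cat{E}$ is triangulated, so the family $\{ N_z[i] \}$ lies in $\cat{E}$ for every $i$, and then compactness of $M$ applies directly. The only point requiring care is the interchange of direct sums with cohomology and with $\opn{Hom}_A(P, -)$ at the level of DG modules; but these are standard, since a direct sum of quasi-isomorphisms of DG modules is again a quasi-isomorphism, so no serious obstacle arises.
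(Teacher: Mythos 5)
Your proposal is correct and follows essentially the same route as the paper: identify $\opn{H}^i(F(L)) \cong \opn{Hom}_{\til{\cat{D}}(\cat{DGMod} A)}(M, L[i])$ using the K-projectivity of $P$, and then translate commutation of $F$ with direct sums into the compactness condition degreewise, using that a morphism in the derived category is an isomorphism iff it is one on all cohomologies and that cohomology and shifts interact well with direct sums. Your write-up just makes explicit the details (the comparison morphism $\phi$, the shift argument for all $i$) that the paper's proof leaves implicit.
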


\begin{proof}
We know that 
\[ \opn{Hom}_{\til{\cat{D}}(\cat{DGMod} A)}(M, L[j]) \cong 
\mrm{H}^j (\opn{RHom}_{A}(M, L)) \cong 
\mrm{H}^j (F (L)) , \]
functorially for $L \in \til{\cat{D}}(\cat{DGMod} A)$. 
So $M$ is compact relative to $\cat{E}$ if and only if the functors
$\mrm{H}^j \circ F$ commute with direct sums in $\cat{E}$.
But that is the same as asking $F$ to commute with direct sums in $\cat{E}$.
\end{proof}

\begin{thm} \label{thm:34}
Let $A$ be a DG $\K$-algebra, let $\cat{E}$ be a  be a full triangulated
subcategory of $\til{\cat{D}}(\cat{DGMod} A)$ which is closed under infinite
direct sums, and let $M$ be a compact generator of $\cat{E}$. 
Choose a K-projective resolution $P \to M$ in $\cat{DGMod} A$, and define 
$B := \opn{End}_{A}(P)$. Then the functor 
\[ F|_{\cat{E}} : \cat{E} \to \til{\cat{D}}(\cat{DGMod} B^{\mrm{op}}) \]
from \tup{(\ref{eqn:451})} is an equivalence of triangulated categories, with 
quasi-inverse the functor $G$ from \tup{(\ref{eqn:450})} .
\end{thm}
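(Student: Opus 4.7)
The plan is to use the standard derived tensor--Hom adjunction between $G$ and $F$, and then show that both the unit and counit are natural isomorphisms. First I would produce bifunctorial morphisms
\[ \eta : \mathrm{id} \to F \circ G, \qquad \varepsilon : G \circ F \to \mathrm{id}. \]
Since $P$ is K-projective over $A$, the functor $F = \opn{Hom}_A(P,-)$ computes $\opn{RHom}_A(P,-)$; the functor $G = (-) \ot_B^{\mrm{L}} P$ is computed via K-flat resolutions in $\cat{DGMod} B^{\mrm{op}}$. These fit into the standard derived tensor--Hom adjunction, and $\eta$ and $\varepsilon$ are the unit and counit.

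Second, I would verify the isomorphisms at the distinguished objects. By construction $G(B) = B \ot_B^{\mrm{L}} P \simeq P \simeq M$ and $F(P) = \opn{End}_A(P) = B$, and chasing these identifications through the adjunction shows that $\eta_B : B \to F(G(B))$ and $\varepsilon_M : G(F(M)) \to M$ both reduce to canonical quasi-isomorphisms and are therefore isomorphisms in the respective derived categories.

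For the unit I would apply Lemma \ref{lem:270} with $B^{\mrm{op}}$ playing the role of $A$. The functor $G$ commutes with infinite direct sums because it is a left derived tensor product, and its image lies in $\cat{E}$ (as $G(B) \simeq M \in \cat{E}$, and $\cat{E}$ is closed under sums and triangles). The restriction $F|_{\cat{E}}$ commutes with infinite direct sums by Lemma \ref{lem:450}, using that $M$ is compact in $\cat{E}$. Hence $F \circ G$ and $\mathrm{id}$ are triangulated endofunctors of $\til{\cat{D}}(\cat{DGMod} B^{\mrm{op}})$ commuting with direct sums. Since $\eta_B$ is an isomorphism, Lemma \ref{lem:270} yields that $\eta$ is a natural isomorphism on all of $\til{\cat{D}}(\cat{DGMod} B^{\mrm{op}})$.

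The counit is the harder half and is where I expect the main obstacle. Lemma \ref{lem:270} is phrased with $A$ as the distinguished generator of the ambient category, whereas here we must show $\varepsilon$ is an isomorphism on the subcategory $\cat{E}$, generated only by the compact object $M$. The full subcategory of $\cat{E}$ on which $\varepsilon$ is an isomorphism contains $M$, is closed under shifts, mapping cones, and, because both $G \circ F$ and $\mathrm{id}$ commute with direct sums in $\cat{E}$, is closed under infinite direct sums. So it is a localizing subcategory of $\cat{E}$ containing the compact generator $M$. To conclude, I would adapt the devissage in the proof of Lemma \ref{lem:270} with $M$ in place of $A$: using compactness of $M$ together with the generator property, any $N \in \cat{E}$ admits an analogue of a semi-free resolution built by iteratively attaching mapping cones of morphisms out of direct sums of shifts of $M$ to kill the groups $\opn{Hom}_{\cat{E}}(M, N[i])$, and then assembled by the telescope construction as in Lemma \ref{lem:270}. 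This forces the localizing subcategory to equal $\cat{E}$, hence $\varepsilon$ is a natural isomorphism, and $F|_{\cat{E}}$ is the claimed equivalence with quasi-inverse $G$.
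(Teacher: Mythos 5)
Your proposal is correct, and its first half (the adjunction, and proving the unit $\eta$ is an isomorphism via Lemma \ref{lem:270} applied over $B^{\mrm{op}}$ after checking $\eta_B$) is essentially the paper's argument. Where you genuinely diverge is the counit/essential-image step. The paper does not need any tower construction there: having shown $\eta$ is an isomorphism, it takes $L \in \cat{E}$, forms the triangle $(G \circ F)(L) \xar{\zeta_L} L \to L' \distri$, applies $F$ and uses $\eta$ to see that $F(\zeta_L)$ is an isomorphism, hence $F(L') = 0$; since $F(L') \cong \opn{RHom}_A(M, L')$, this gives $\opn{Hom}_{\cat{D}(A)}(M, L'[i]) = 0$ for all $i$, and the generator hypothesis kills $L'$ outright. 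Your route instead re-proves (a case of) Neeman's theorem that a localizing subcategory of $\cat{E}$ containing the compact generator $M$ is all of $\cat{E}$, by building an $M$-cellular tower and a telescope. That works, but it is heavier machinery than needed, and your sketch elides the delicate part: the stages do not literally ``kill'' the groups $\opn{Hom}_{\cat{E}}(M, N[i])$; one must arrange surjectivity onto them at each stage and annihilate kernels at the next, then use compactness of $M$ to pass $\opn{Hom}(M[i], -)$ through the homotopy colimit, and only then invoke the generator property on the cone. Note also that this devissage is not a direct transcription of Lemma \ref{lem:270}, whose filtration comes from an honest semi-basis over a DG algebra rather than from an abstract compact generator. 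The trade-off: your argument yields a reusable general principle, while the paper's exploits the specific fact that $F$ itself computes $\opn{RHom}_A(M, -)$, so the generator property applies directly to the cone of the counit in two lines.
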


\begin{proof}
Let us write $\cat{D}(A) := \til{\cat{D}}(\cat{DGMod} A)$ etc. 
We begin by proving that the functors $F$ and $G$ are adjoints. 
Take any $L \in \cat{D}(A)$ and $N \in \cat{D}(B^{\mrm{op}})$. We have to
construct a  bijection
\[ \opn{Hom}_{\cat{D}(A)}(G (N), L) \cong
\opn{Hom}_{\cat{D}(B^{\mrm{op}})}(N, F (L)) , \]
which is bifunctorial. Choose a K-projective resolution 
$Q \to N$ in $\cat{DGMod} B^{\mrm{op}}$. 
Since the DG $A$-module $Q \otimes_B P$ is K-projective, we have a sequence of
isomorphisms (of $\K$-modules)
\[ \begin{aligned}
& \opn{Hom}_{\cat{D}(A)}(G (N), L) \cong 
\mrm{H}^0 (\opn{RHom}_{A}( G (N), L)) \\
& \quad \cong \mrm{H}^0 (\opn{Hom}_{A}( Q \otimes_B P , L))
\cong \mrm{H}^0 \bigl( \opn{Hom}_{B^{\mrm{op}}}( Q , \opn{Hom}_A(P, L)) \bigr)
\\
& \quad \cong \mrm{H}^0 (\opn{RHom}_{B^{\mrm{op}}}( N , F (L) ) 
\cong \opn{Hom}_{\cat{D}(B^{\mrm{op}})}( N, F (L)) .
\end{aligned} \]
The only choice made was in the K-projective resolution $Q \to N$, so all is
bifunctorial. The corresponding morphisms 
$\bsym{1} \to F \circ G$ and $G \circ F \to \bsym{1}$ are denoted by $\eta$ and
$\zeta$ respectively. 

Next we will prove that $G$ is fully faithful. We do this by showing that
for every $N$ the morphism
$\eta_N :  N \to (F \circ G) (N)$
in $\cat{D}(B^{\mrm{op}})$ is an isomorphism. We know that $G$ factors via the
full subcategory $\cat{E} \subset \cat{D}(A)$, and therefore, using Lemma 
\ref{lem:450}, we know that the functor $F \circ G$ commutes with
infinite direct sums. So by Lemma \ref{lem:270} it suffices to check
for $N = B$. But in this case $\eta_B$ is the
canonical homomorphism of DG $B^{\mrm{op}}$-modules
$B \to \opn{Hom}_{A}(P, B \otimes_B P)$,
which is clearly bijective. 

It remains to prove that the essential image of the functor $G$ is
$\cat{E}$. Take any $L \in \cat{E}$, and 
consider the distinguished triangle
$(G \circ F)(L) \xar{\zeta_{L}} L \to L' \distri$
in $\cat{E}$, in which $L' \in \cat{E}$ is the mapping cone of $\zeta_{L}$.
Applying $F$ and
using $\eta$ we get a distinguished triangle
$F(L) \xar{1_{F(L)}} F(L) \to F(L') \distri$. 
Therefore $F(L') = 0$. But 
$\opn{RHom}_{A}(M, L') \cong  F(L')$,
and therefore 
$\opn{Hom}_{\cat{D}(A)}(M, L'[i]) = 0$
for every $i$. Since $M$ is a generator of $\cat{E}$ we get $L' = 0$. Hence 
$\zeta_{L}$ is an isomorphism, and so $L$ is in the essential
image of $G$.
\end{proof}

\section{The Main Theorem} \label{sec:ddc}

This is our interpretation of the completion appearing in Efimov's recent paper
\cite{Ef}, that is attributed to Kontsevich; cf.\ Remark \ref{rem:1} below
for a comparison to \cite{Ef} and to similar results in recent literature. 
Here is the setup for this section:
$A$ is a commutative ring, and $\a$ is a weakly proregular ideal in
$A$. We do not assume that $A$ is noetherian nor $\a$-adically complete. Let
$\what{A} := \La_{\a}(A)$, the $\a$-adic completion of $A$, and let 
$\what{\a} := \a \cdot \what{A}$, which is an ideal of $\what{A}$. Since the
ideal $\a$ is finitely generated, it follows that the $A$-module $\what{A}$
is $\a$-adically complete, and hence as a ring $\what{A}$ is
$\what{\a}$-adically complete.

The full subcategory 
$\cat{D}(\cat{Mod} A)_{\a \tup{-tor}} \subset \cat{D}(\cat{Mod} A)$
is triangulated and closed under infinite direct sums. 
The results of Sections \ref{sec:der-end} and \ref{sec:der-morita} are invoked
with $\K := A$. 

Recall the Koszul complex $\mrm{K}(A; \bsym{a})$ associated to a finite sequence
$\bsym{a}$ in $A$; see Section \ref{sec:kosz}. It is a bounded complex of free
$A$-modules, and hence it is a K-projective DG $A$-module. 
The next result was proved by several authors (see 
\cite[Proposition 6.1]{BN}, \cite[Corollary 5.7.1(ii)]{LN} and 
\cite[Proposition 6.6]{Ro}).

\begin{prop}
Let $\bsym{a}$ be a finite sequence that generates $\a$. 
Then the Koszul complex $\opn{K}(A; \bsym{a})$ is a compact generator of 
$\cat{D}(\cat{Mod} A)_{\a \tup{-tor}}$.
\end{prop}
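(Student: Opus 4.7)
The plan is to verify three claims about $K := \opn{K}(A; \bsym{a})$: (i) $K$ is compact in $\cat{D}(\cat{Mod} A)$; (ii) $K$ lies in $\cat{D}(\cat{Mod} A)_{\a \tup{-tor}}$; and (iii) $K$ is a generator of this subcategory. Together these give the proposition, since compactness in the ambient derived category transfers to any full subcategory closed under direct sums. Part (i) is routine: $K$ is a bounded complex of finitely generated free $A$-modules, so it is K-projective, and $\opn{Hom}_A(K, -)$ commutes with direct sums of DG modules; passing to $\opn{H}^0$ gives compactness. For (ii) I would use Theorem \ref{thm:503} to write $\mrm{R} \Ga_{\a}(K) \cong \opn{K}_{\infty}^{\vee}(A; \bsym{a}) \ot_A K$, factor $\opn{K}_{\infty}^{\vee}(A; \bsym{a})$ as $\bigotimes_k \opn{K}_{\infty}^{\vee}(A; a_k)$, and reduce to the single-generator identity $\opn{K}_{\infty}^{\vee}(A; a) \ot_A \opn{K}(A; a) \iso \opn{K}(A; a)$; the latter follows from the distinguished triangle $\opn{K}_{\infty}^{\vee}(A; a) \to A \to A_a \distri$ together with the acyclicity of $A_a \ot_A \opn{K}(A; a)$ (since $a$ is invertible in $A_a$).

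The crux is (iii). Let $N \in \cat{D}(\cat{Mod} A)_{\a \tup{-tor}}$ satisfy $\opn{Hom}_{\cat{D}(\cat{Mod} A)}(K, N[j]) = 0$ for every $j \in \Z$. Since $K$ is K-projective and term-wise finitely generated free, $\opn{RHom}_A(K, N) \cong \opn{Hom}_A(K, N)$ is acyclic in $\cat{D}(\cat{Mod} A)$. The key step is to promote this single vanishing to the vanishing of $\opn{RHom}_A(\opn{K}(A; \bsym{a}^i), N)$ for every $i \geq 1$. For this I would show that $\opn{K}(A; \bsym{a}^i)$ lies in the full triangulated subcategory of $\cat{D}(\cat{Mod} A)$ generated by $K$ (no direct sums required). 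Indeed, the octahedral axiom applied to the factorisation $a_k^i = a_k \cdot a_k^{i-1}$ yields a distinguished triangle
\[ \opn{K}(A; a_k) \to \opn{K}(A; a_k^i) \to \opn{K}(A; a_k^{i-1}) \distri \ , \]
so by induction on $i$ each $\opn{K}(A; a_k^i)$ lies in the triangulated subcategory generated by $\opn{K}(A; a_k)$; tensoring these relations across $k$ places $\opn{K}(A; \bsym{a}^i)$ in the triangulated subcategory generated by $\opn{K}(A; \bsym{a}) = K$. Applying the triangulated functor $\opn{RHom}_A(-, N)$ now forces $\opn{RHom}_A(\opn{K}(A; \bsym{a}^i), N) \cong 0$ for every $i$.

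Because each $\opn{K}(A; \bsym{a}^i)$ is term-wise finitely generated free, $\opn{Hom}_A(\opn{K}(A; \bsym{a}^i), N) \cong \opn{Hom}_A(\opn{K}(A; \bsym{a}^i), A) \ot_A N$, and filtered colimits commute with tensor products. Theorem \ref{thm:503} therefore gives
\[ \mrm{R} \Ga_{\a}(N) \cong \opn{K}_{\infty}^{\vee}(A; \bsym{a}) \ot_A N \cong \lim_{i \to} \opn{RHom}_A(\opn{K}(A; \bsym{a}^i), N) \cong 0 . \]
Since $N$ is cohomologically $\a$-torsion, $N \cong \mrm{R} \Ga_{\a}(N) = 0$, completing the argument. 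The main obstacle is the octahedral reduction in (iii): it is what allows the single vanishing of $\opn{RHom}_A(K, N)$ to propagate through the family $\{\opn{K}(A; \bsym{a}^i)\}_i$ and hence, via the defining colimit of $\opn{K}_{\infty}^{\vee}(A; \bsym{a})$, to $\mrm{R} \Ga_{\a}(N)$ itself.
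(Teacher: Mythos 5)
Your argument is correct, but note that the paper itself gives no proof of this proposition: it only cites \cite{BN}, \cite{LN} and \cite{Ro}, so there is no internal argument to compare against. What you wrote is essentially the standard proof underlying those citations: compactness because $\opn{K}(A;\bsym{a})$ is a bounded complex of finitely generated free modules, and generation by the octahedral d\'evissage coming from $a_k^i=a_k\cdot a_k^{i-1}$, which places every $\opn{K}(A;\bsym{a}^i)$ in the triangulated subcategory generated by $\opn{K}(A;\bsym{a})$, followed by the filtered-colimit (telescope) description of $\mrm{R}\Ga_{\a}$ to kill $N$. The difference is that you route everything through the paper's weak-proregularity machinery (Theorem \ref{thm:503}), whereas \cite{BN}, \cite{LN} and \cite{Ro} work with the category of complexes with torsion (or supported) cohomology and need no proregularity hypothesis at all; that is why the introduction can assert compact generation for every finitely generated ideal, while your proof is tied to the standing assumption of Section \ref{sec:ddc} that $\a$ is weakly proregular. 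Within that setup your route is legitimate, and it has the virtue of using only results quoted in this paper. Two points you leave implicit and should state: to invoke Theorem \ref{thm:503} you need the sequence $\bsym{a}$ itself, not just the ideal, to be weakly proregular, which is exactly Theorem \ref{thm:501}; and in step (ii), having produced an isomorphism $\opn{K}_{\infty}^{\vee}(A;\bsym{a})\ot_A K \iso K$, the cleanest way to conclude that $K$ lies in $\cat{D}(\cat{Mod} A)_{\a \tup{-tor}}$ is to combine it with Theorem \ref{thm:201}(1), so that $K$ is isomorphic to the cohomologically torsion complex $\mrm{R}\Ga_{\a}(K)$ and the subcategory is full and closed under isomorphism, rather than verifying that your composite map agrees with $\sigma^{\mrm{R}}_K$.
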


Of course there are other compact generators of 
$\cat{D}(\cat{Mod} A)_{\a \tup{-tor}}$.

\begin{thm} \label{thm:22}
Let $A$ be a commutative ring, let  $\a$ be a weakly proregular ideal in $A$,
and let $M$ be a compact generator of $\cat{D}(\cat{Mod} A)_{\a \tup{-tor}}$.
Choose some K-projective resolution $P \to M$ in 
$\cat{C}(\cat{Mod} A)$, and let 
$B :=  \opn{End}_A(P)$.  Then
$\opn{Ext}^i_{B}(P) = 0$ for all $i \neq 0$, and there is a unique isomorphism
of $A$-algebras
$\opn{Ext}^0_{B}(P) \cong \what{A}$.
\end{thm}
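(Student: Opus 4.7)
I would follow the two-step strategy outlined in the Introduction. Set $\cat{E} := \cat{D}(\cat{Mod} A)_{\a\tup{-tor}}$ and $N := \mrm{R}\Ga_{\a}(A)$, which lies in $\cat{E}$ by Theorem \ref{thm:201}(1).

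The first step uses derived Morita theory to identify $\opn{Ext}_B(P)$ with $\opn{Ext}_A(N)^{\mrm{op}}$ as graded $A$-algebras. Applying Theorem \ref{thm:34} to $\cat{E}$ with compact generator $M$, the functor $F := \opn{RHom}_A(P, -) : \cat{E} \iso \til{\cat{D}}(\cat{DGMod} B^{\mrm{op}})$ is a triangulated equivalence sending $M$ to $B$. Since $P$ itself lies in $\cat{E}$ and $\mrm{R}\Ga_{\a}$ is coreflective, one has $F(N) = \opn{RHom}_A(P, \mrm{R}\Ga_{\a}(A)) \cong \opn{RHom}_A(P, A) = \opn{Hom}_A(P, A) =: P^{\vee}$ as a right DG $B$-module. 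Being an equivalence, $F$ then induces a graded $A$-algebra isomorphism $\opn{Ext}_A(N) \iso \opn{Ext}_{B^{\mrm{op}}}(P^{\vee})$. The contravariant $A$-duality $(-)^{\vee}$ takes a left-$B$-linear endomorphism of $P$ to a right-$B$-linear endomorphism of $P^{\vee}$, reversing the order of composition, and this should promote to a graded-algebra isomorphism $\opn{Ext}_{B^{\mrm{op}}}(P^{\vee}) \cong \opn{Ext}_B(P)^{\mrm{op}}$.

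The second step combines MGM equivalence with a direct adjunction. By Theorem \ref{thm:201}(2), the quasi-inverse $\mrm{L}\La_{\a}$ sends $N$ to $\mrm{L}\La_{\a}(N) = \mrm{L}\La_{\a}(\mrm{R}\Ga_{\a}(A)) \cong \mrm{L}\La_{\a}(A) = \La_{\a}(A) = \what{A}$, using that $A$ is K-flat over itself and that $\a$ is finitely generated. Because $\cat{E}$ and $\cat{D}(\cat{Mod} A)_{\a\tup{-com}}$ are both full in $\cat{D}(\cat{Mod} A)$, the MGM equivalence preserves Ext-algebras, giving $\opn{Ext}_A(N) \cong \opn{Ext}_A(\what{A})$. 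The reflector adjunction $\mrm{L}\La_{\a} \dashv \iota$, where $\iota$ is the inclusion of $\a$-adically complete complexes, then yields
\[ \opn{RHom}_A(\what{A}, \what{A}) = \opn{RHom}_A(\mrm{L}\La_{\a}(A), \what{A}) \cong \opn{RHom}_A(A, \what{A}) = \what{A}, \]
concentrated in degree $0$. Assembling everything, $\opn{Ext}^i_B(P) = 0$ for $i \neq 0$ and $\opn{Ext}^0_B(P) \cong \what{A}^{\mrm{op}} = \what{A}$ as $A$-algebras; uniqueness of the isomorphism is inherited from the universal adjunctions used throughout.

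\textbf{Main obstacle.} The technical heart is the derived duality identification $\opn{Ext}_{B^{\mrm{op}}}(P^{\vee}) \cong \opn{Ext}_B(P)^{\mrm{op}}$ in the first step. At the level of honest endomorphisms the $A$-duality $(-)^{\vee}$ is a ring anti-homomorphism onto its image, but $P$ is generally neither K-projective over $B$ nor perfect over $A$, so naive reflexivity $(P^{\vee})^{\vee} \cong P$ need not hold. I would handle this either by resolving $P$ as a DG $B$-module and tracking the duality on resolutions, or more conceptually by transporting the Yoneda composition through $F$ and its quasi-inverse $G$ — the ${}^{\mrm{op}}$ then arising naturally from the asymmetry of source and target in the morphism pairings.
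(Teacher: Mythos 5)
Your route is the paper's route (Morita equivalence via $F$, GM duality to pass from $A$ to $N = \mrm{R}\Ga_{\a}(A)$, MGM to get $\opn{Ext}_A(N) \cong \opn{Ext}_A(\what{A})$, and your ``reflector adjunction'' computation of $\opn{RHom}_A(\what{A},\what{A})$ is in substance Lemma \ref{lem:305}(2)), but the step you yourself flag as the main obstacle --- $\opn{Ext}_{B^{\mrm{op}}}(P^{\vee}) \cong \opn{Ext}_B(P)^{\mrm{op}}$ --- is left genuinely open, and your stated reason for worry, that ``$P$ is generally not perfect over $A$'', is exactly the point you are missing: in this situation $P$ \emph{is} perfect over $A$. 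The key ingredient is Lemma \ref{lem:21}: a compact object of $\cat{D}(\cat{Mod} A)_{\a\tup{-tor}}$ is already compact in $\cat{D}(\cat{Mod} A)$, because $\mrm{R}\Ga_{\a} \cong \opn{K}^{\vee}_{\infty}(A;\bsym{a}) \ot_A -$ commutes with infinite direct sums and is idempotent, so that $\opn{Hom}_{\cat{D}(\cat{Mod} A)}(M, \mrm{R}\Ga_{\a}(N)) \to \opn{Hom}_{\cat{D}(\cat{Mod} A)}(M,N)$ is bijective for all $N$; hence $M$, and therefore $P$, is a perfect complex of $A$-modules. With perfection in hand, the duality is Lemma \ref{lem:22}: $D := \opn{Hom}_A(-,I)$, for an injective resolution $A \to I$, is a contravariant equivalence between the $A$-perfect objects of $\cat{D}(\cat{Mod} B)$ and of $\cat{D}(\cat{Mod} B^{\mrm{op}})$, since the biduality map $P \to DD(P)$ can be checked over $A$ after replacing $P$ by a bounded complex of finitely generated projectives; and $D(P) \cong \opn{Hom}_A(P,A) = F(A) \cong F(N)$ closes the circle. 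Neither of your proposed workarounds supplies this: resolving $P$ over $B$ does not by itself produce the anti-isomorphism, and ``transporting the Yoneda composition through $F$ and $G$'' cannot work as stated, because $P$ is a \emph{left} DG $B$-module while $F$ is defined on $\cat{E} \subset \til{\cat{D}}(\cat{DGMod} A)$ and takes values in right DG $B$-modules; some duality exchanging left and right $B$-modules, with control of biduality, is unavoidable, and that control is precisely $A$-perfection.

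Two smaller points. The uniqueness of the isomorphism is not ``inherited from the universal adjunctions'': what must be shown is that any two $A$-algebra isomorphisms $\opn{Ext}^0_B(P) \cong \what{A}$ agree, i.e.\ that $\what{A}$ admits no nontrivial $A$-algebra automorphism, which the paper deduces from the density of the image of $A \to \what{A}$ together with $\what{\a}$-adic completeness of $\what{A}$. Also, your appeal to ``$\mrm{R}\Ga_{\a}$ is coreflective'' to get $\opn{RHom}_A(P,\mrm{R}\Ga_{\a}(A)) \cong \opn{RHom}_A(P,A)$ is correct for weakly proregular $\a$, but it needs a citation or proof; this is the GM duality statement \cite[Theorem 7.12]{PSY} that the paper invokes at the corresponding step.
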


Recall that the DG $A$-algebra $B$ is a derived endomorphism DG algebra of $M$
(Definition \ref{dfn:310}), and the graded $A$-algebra 
$\opn{Ext}_{B}(P)$ is a derived double centralizer of $M$ (Definition
\ref{dfn:303}).

We need a few lemmas before proving the theorem.

\begin{lem} \label{lem:21}
Let $M$ be a compact object of $\cat{D}(\cat{Mod} A)_{\a \tup{-tor}}$.
Then $M$ is also compact in $\cat{D}(\cat{Mod} A)$, so it is a perfect complex
of $A$-modules. 
\end{lem}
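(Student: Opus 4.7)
My plan is to exploit the Koszul complex as a compact generator of the torsion subcategory that is simultaneously perfect in the ambient derived category. Concretely, since $\a$ is weakly proregular we may choose a finite sequence $\bsym{a} = (a_1, \ldots, a_n)$ generating $\a$, and by the Proposition immediately preceding the theorem, $K := \opn{K}(A; \bsym{a})$ is a compact generator of $\cat{D}(\cat{Mod} A)_{\a \tup{-tor}}$. The decisive observation is that $K$ is a bounded complex of finitely generated free $A$-modules, hence a perfect complex, hence already a compact object of $\cat{D}(\cat{Mod} A)$ itself.

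The key step is then to invoke the standard Neeman--Ravenel theorem: in a compactly generated triangulated category, the compact objects form precisely the thick subcategory generated by any set of compact generators. Applying this inside $\cat{D}(\cat{Mod} A)_{\a \tup{-tor}}$ shows that $M$ belongs to the thick subcategory generated by $K$ there. The plan is then to transport this membership to $\cat{D}(\cat{Mod} A)$: because $\cat{D}(\cat{Mod} A)_{\a \tup{-tor}}$ is a full triangulated subcategory of $\cat{D}(\cat{Mod} A)$ closed under direct summands (being the essential image of the idempotent functor $\mrm{R}\Ga_{\a}$, via Theorem \ref{thm:201}), forming the thick closure of $\{K\}$ inside the torsion subcategory or inside the ambient category yields the same collection of objects.

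Once $M$ is identified with an element of the thick subcategory of $\cat{D}(\cat{Mod} A)$ generated by the compact object $K$, I would conclude by the general fact that the compact objects of any triangulated category form a thick subcategory (direct summands and cones of compacts are compact, using that finite direct sums commute with $\opn{Hom}$). Thus $M$ is compact in $\cat{D}(\cat{Mod} A)$, and the lemma follows from the classical Keller--Rickard theorem identifying the compact objects of $\cat{D}(\cat{Mod} A)$ for an ordinary ring with the perfect complexes.

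The main potential obstacle is verifying the compatibility of the two thick closures, i.e., confirming that $\cat{D}(\cat{Mod} A)_{\a \tup{-tor}}$ is itself thick as a subcategory of $\cat{D}(\cat{Mod} A)$; I expect this to drop out cleanly from MGM equivalence (Theorem \ref{thm:201}), since a direct summand $M'$ of a torsion object $N$ satisfies $\mrm{R}\Ga_{\a}(M') \to M'$ is a retract of the isomorphism $\mrm{R}\Ga_{\a}(N) \to N$, hence itself an isomorphism. An alternative route, which I would fall back on if a cleaner formulation is desired, is to bypass the thick-closure machinery entirely: use Theorem \ref{thm:503} to see that $\mrm{R}\Ga_{\a} \cong \opn{K}^{\vee}_{\infty}(A;\bsym{a}) \ot_A (-)$ commutes with arbitrary direct sums, combine this with the identity $\opn{Hom}_{\cat{D}(\cat{Mod} A)}(M, N) \cong \opn{Hom}_{\cat{D}(\cat{Mod} A)}(M, \mrm{R}\Ga_{\a}(N))$ valid for torsion $M$, and conclude compactness in the ambient category directly.
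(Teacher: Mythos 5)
Your argument is correct, but your main route is genuinely different from the one in the paper; amusingly, the ``fallback'' you sketch at the end is essentially the paper's actual proof. The paper argues directly: it quotes the isomorphism of functors $\mrm{R} \Ga_{\a} \cong \opn{K}^{\vee}_{\infty}(A; \bsym{a}) \ot_A -$ (so $\mrm{R} \Ga_{\a}$ commutes with infinite direct sums), uses idempotence of $\mrm{R} \Ga_{\a}$ from Theorem \ref{thm:201}(1) to show that $\opn{Hom}(\bsym{1}, \sigma^{\mrm{R}}_N)$ is bijective for cohomologically torsion $M$, and then concludes compactness in $\cat{D}(\cat{Mod} A)$ by a short chain of isomorphisms -- exactly the two ingredients you list in your last sentence (note that the bijection $\opn{Hom}(M,\mrm{R}\Ga_{\a}(N)) \cong \opn{Hom}(M,N)$ does require weak proregularity, via the idempotence of $\mrm{R}\Ga_{\a}$, so it is not purely formal). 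Your primary route instead goes through Neeman's theorem that in a compactly generated triangulated category the compact objects are exactly the thick closure of a set of compact generators, applied to the torsion category with the Koszul complex as generator, followed by the transfer of thick closures from $\cat{D}(\cat{Mod} A)_{\a \tup{-tor}}$ to $\cat{D}(\cat{Mod} A)$; the verifications you flag (the torsion category has coproducts, is closed under direct summands since $\sigma^{\mrm{R}}$ is natural and a retract of an isomorphism is an isomorphism, and thick closures computed in a thick full triangulated subcategory agree with those in the ambient category) all go through. What your route buys is slightly more information -- $M$ actually lies in the thick subcategory generated by $\opn{K}(A;\bsym{a})$ -- and it localizes the use of weak proregularity to the quoted Proposition about the Koszul generator; what it costs is the importation of Neeman's classification of compacts, which the paper never invokes, whereas the paper's argument is self-contained modulo the results of \cite{PSY} it already quotes.
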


\begin{proof}
Choose a finite sequence $\bsym{a}$ that generates $\a$. By
\cite[Corollary 4.26]{PSY} there is an isomorphism of functors 
$\mrm{R} \Gamma_{\a} \cong \opn{K}^{\vee}_{\infty}(A; \bsym{a}) \ot_A -$,
where $\opn{K}^{\vee}_{\infty}(A; \bsym{a})$
is the infinite dual Koszul complex. Therefore the functor 
$\mrm{R} \Gamma_{\a}$ commutes with infinite direct sums.

Let $N \in \cat{D}(\cat{Mod} A)$, and consider the function 
\[ \opn{Hom}(\bsym{1}, \sigma^{\mrm{R}}_N) : 
\opn{Hom}_{\cat{D}(\cat{Mod} A)}(M, \mrm{R} \Gamma_{\a} (N))
\to \opn{Hom}_{\cat{D}(\cat{Mod} A)}(M, N) . \]
Given a morphism $\alpha : M \to N$ in $\cat{D}(\cat{Mod} A)$
define 
\[ \beta := \mrm{R} \Gamma_{\a}(\alpha) \circ 
(\sigma^{\mrm{R}}_M)^{-1} : M \to \mrm{R} \Gamma_{\a} (N) . \]
Since the functor $\mrm{R} \Gamma_{\a}$ is idempotent (Theorem
\ref{thm:201}(1)), the function $\alpha \mapsto \beta$ is an inverse to 
$\opn{Hom}(\bsym{1}, \sigma^{\mrm{R}}_N)$, so the latter is bijective. 

Let $\{ N_z \}_{z \in Z}$ be a collection of objects of 
$\cat{D}(\cat{Mod} A)$. Due to the fact that $M$ is a compact object of 
$\cat{D}(\cat{Mod} A)_{\a \tup{-tor}}$, and to the observations above, 
we get isomorphisms
\[ \begin{aligned}
& \bigoplus_z \, \opn{Hom}_{\cat{D}(\cat{Mod} A)}(M, N_z) 
\cong \bigoplus_z \, 
\opn{Hom}_{\cat{D}(\cat{Mod} A)}(M, \mrm{R} \Gamma_{\a} (N_z)) \\
& \quad \cong \opn{Hom}_{\cat{D}(\cat{Mod} A)} 
\bigl( M, \boplus_z \, \mrm{R} \Gamma_{\a} (N_z)
\bigr) 
\cong \opn{Hom}_{\cat{D}(\cat{Mod} A)} \bigl( M, \mrm{R} \Gamma_{\a} \bigl(
\boplus_z \,  N_z \bigr) \bigr) \\
& \quad \cong \opn{Hom}_{\cat{D}(\cat{Mod} A)} \bigl( M, \boplus_z \,  
N_z \bigr) .
\end{aligned} \]
We see that $M$ is also compact in $\cat{D}(\cat{Mod} A)$.
\end{proof}

Consider the contravariant functor
\[ D : \cat{D}(\cat{Mod} B) \to \cat{D}(\cat{Mod} B^{\mrm{op}}) \]
defined by choosing an injective resolution $A \to I$ over $A$, and
letting
$D := \opn{Hom}_A(-, I)$.

\begin{lem} \label{lem:22}
The functor $D$ induces a duality \tup{(}i.e.\ a contravariant
equivalence\tup{)} between the full subcategory of 
$\cat{D}(\cat{Mod} B)$ consisting of objects perfect over $A$, and the full
subcategory
of $\cat{D}(\cat{Mod} B^{\mrm{op}})$ consisting of objects perfect over $A$.
\end{lem}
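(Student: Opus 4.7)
The plan is to verify the two defining properties of a contravariant equivalence: namely that $D$ sends perfect-over-$A$ objects to perfect-over-$A$ objects, and that the natural biduality map $N \to D(D(N))$ is a quasi-isomorphism for every such $N$ (with the symmetric statement on the $B^{\mrm{op}}$-side). Throughout, one keeps in mind that although $D$ is defined in terms of the $A$-module structure, for $N \in \cat{D}(\cat{Mod} B)$ the complex $\opn{Hom}_A(N, I)$ inherits a natural right $B$-action by pre-composition, so $D$ really is a functor $\cat{D}(\cat{Mod} B) \to \cat{D}(\cat{Mod} B^{\mrm{op}})$; and since $I$ is K-injective over $A$ and $A \to I$ is a quasi-isomorphism, $D$ computes $\opn{RHom}_A(-,A)$ on underlying $A$-complexes.

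For preservation, I would use d\'evissage: the full subcategory of $N \in \cat{D}(\cat{Mod} A)$ for which $D(N)$ is perfect over $A$ is triangulated and closed under direct summands, and contains $A$ itself (since $D(A) \cong A$ in $\cat{D}(\cat{Mod} A)$). Hence it contains every perfect $A$-complex, in particular the underlying $A$-complex of any object of $\cat{D}(\cat{Mod} B)$ that is perfect over $A$. The $B$-action on $N$ plays no role in this test, which is why the argument goes through uniformly on either side.

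For biduality, I would write the evaluation homomorphism $\mathrm{ev}_N : N \to D(D(N)) = \opn{Hom}_A(\opn{Hom}_A(N, I), I)$, sending $n$ to $f \mapsto (-1)^{\lvert n\rvert \cdot \lvert f\rvert} f(n)$. A direct check in the DG setting shows that $\mathrm{ev}_N$ is $B$-linear and natural in $N$, so it descends to a morphism in $\cat{D}(\cat{Mod} B)$. To see that it is a quasi-isomorphism when $N$ is perfect over $A$, I would again use d\'evissage on the underlying $A$-complex: the full subcategory of $\cat{D}(\cat{Mod} A)$ on which biduality is a quasi-isomorphism is triangulated, closed under direct summands, and tautologically contains $A$, hence contains every perfect $A$-complex. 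The symmetric argument with $B$ and $B^{\mrm{op}}$ interchanged gives the analogous statement starting from $\cat{D}(\cat{Mod} B^{\mrm{op}})$, so $D$ and its opposite-module analog are mutually quasi-inverse contravariant equivalences between the perfect-over-$A$ subcategories.

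The only subtle point is confirming that $\mathrm{ev}_N$ is genuinely a morphism of DG $B$-modules (so that the isomorphism takes place in $\cat{D}(\cat{Mod} B)$, not merely in $\cat{D}(\cat{Mod} A)$); this requires a small sign/action computation but is otherwise routine. Once it is in place, the core content reduces to the classical duality of perfect complexes over a commutative ring, reduced by d\'evissage to the trivial case $N = A$.
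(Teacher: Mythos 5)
Your proposal is correct and follows essentially the same route as the paper: reduce everything to the underlying $A$-complexes (after checking the evaluation map is $B$-linear, which the paper also takes as the key point) and verify biduality for complexes perfect over $A$. The only cosmetic difference is that you reduce to the case $N = A$ by a thick-subcategory d\'evissage, whereas the paper chooses a bounded complex of finitely generated projectives resolving $M$ and then replaces $I$ by $A$; these are interchangeable versions of the same standard argument.
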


\begin{proof}
Take $M \in \cat{D}(\cat{Mod} B)$ which is perfect over $A$. It is enough to
show that the canonical homomorphism of DG $B$-modules
\begin{equation} \label{eqn:4}
M \to (D \circ D) (M) = \opn{Hom}_A(\opn{Hom}_A(M, I), I)
\end{equation}
is a quasi-isomorphism. For this we can forget the $B$-module structure, and
just view this as a homomorphism of DG $A$-modules. Choose a resolution 
$P \to M$ where $P$ is a bounded complex of finitely generated projective
$A$-modules. We can replace $M$ with $P$ in equation (\ref{eqn:4}); and after
that we can replace $I$ with $A$; now it
is clear that this is a quasi-isomorphism.
\end{proof}

\begin{lem} \label{lem:305}
Let $M$ and $N$ be K-flat complexes of $A$-modules.
We write $\what{M} :=  \La_{\a}(M)$ and $\what{N} :=  \La_{\a}(N)$.
\begin{enumerate}
\item The morphisms $\xi_M : \mrm{L} \La_{\a}(M) \to \what{M}$
and 
$\tau^{\mrm{L}}_{\what{M}} : \what{M} \to \mrm{L} \La_{\a}(\what{M})$
are isomorphisms. 

\item The homomorphism 
\[ \opn{Hom}(\tau_M, \bsym{1}) :
\opn{Hom}_{\cat{D}(\cat{Mod} A)}( \what{M}, \what{N}) 
\to 
\opn{Hom}_{\cat{D}(\cat{Mod} A)}( M, \what{N}) \]
is bijective.
\end{enumerate} 
\end{lem}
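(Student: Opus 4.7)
The plan is to reduce both parts to the MGM equivalence (Theorem \ref{thm:201}), with part (1) amounting to the K-flat case of the comparison between derived and underived completion, and part (2) following by a formal adjunction-type argument.

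For part (1), since $\mrm{L}\La_{\a}$ is constructed via K-flat resolutions and $M$ is already K-flat, one may take $M \to M$ itself as the resolution, so $\xi_M$ reduces to the identity $\La_{\a}(M) = \what{M}$ and is in particular an isomorphism; this is the general principle for K-flat complexes recalled at the start of Section \ref{sec:kosz}. The second claim then follows at once: Theorem \ref{thm:201}(1) places $\mrm{L}\La_{\a}(M)$ in $\cat{D}(\cat{Mod} A)_{\a \tup{-com}}$, and transporting this membership along the isomorphism $\xi_M$ shows $\what{M} \in \cat{D}(\cat{Mod} A)_{\a \tup{-com}}$, which is precisely the assertion that $\tau^{\mrm{L}}_{\what{M}}$ is an isomorphism.

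For part (2), the identification provided by part (1) gives $\tau_M = \xi_M \circ \tau^{\mrm{L}}_M$ in $\cat{D}(\cat{Mod} A)$, so it suffices to prove that composition with $\tau^{\mrm{L}}_M$ induces a bijection
\[
\opn{Hom}_{\cat{D}(\cat{Mod} A)}(\mrm{L}\La_{\a}(M), \what{N}) \iso
\opn{Hom}_{\cat{D}(\cat{Mod} A)}(M, \what{N}).
\]
By part (1) applied to $N$, the target $\what{N}$ is cohomologically $\a$-adically complete, meaning $\tau^{\mrm{L}}_{\what{N}}$ is an isomorphism. I would build the inverse explicitly: for $f : M \to \what{N}$, set $g := (\tau^{\mrm{L}}_{\what{N}})^{-1} \circ \mrm{L}\La_{\a}(f)$. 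Naturality of $\tau^{\mrm{L}}$ gives $\tau^{\mrm{L}}_{\what{N}} \circ f = \mrm{L}\La_{\a}(f) \circ \tau^{\mrm{L}}_M$, which yields $g \circ \tau^{\mrm{L}}_M = f$. Uniqueness of such $g$ follows by applying $\mrm{L}\La_{\a}$ to any candidate factorization together with the naturality square, using that $\mrm{L}\La_{\a}$ restricts to an equivalence on $\cat{D}(\cat{Mod} A)_{\a \tup{-com}}$.

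I do not anticipate a genuine obstacle. The only bookkeeping point is the clean identification of the underived $\tau_M$ with the derived $\tau^{\mrm{L}}_M$ in the derived category when $M$ is K-flat; but this is immediate from the construction of $\mrm{L}\La_{\a}$ through K-flat resolutions, since one may take the resolution to be the identity on $M$ itself. All remaining content is a formal consequence of the MGM package.
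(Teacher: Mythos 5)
Part (1) and the construction in part (2) coincide with the paper's own proof: you invert $\xi_M$ because $M$ is K-flat (the paper cites \cite[Proposition 3.6]{PSY} for exactly this), you transport Theorem \ref{thm:201}(1) along $\xi_M$ to conclude that $\what{M}$ is cohomologically complete, and your map $f \mapsto (\tau^{\mrm{L}}_{\what{N}})^{-1} \circ \mrm{L}\La_{\a}(f)$ is, up to the identification by $\xi_M$, precisely the paper's candidate inverse $\alpha \mapsto (\tau^{\mrm{L}}_{\what{N}})^{-1} \circ \mrm{L}\La_{\a}(\alpha) \circ \xi_M^{-1}$; your naturality computation correctly shows it is a right inverse of $\opn{Hom}(\tau_M, \bsym{1})$, i.e.\ surjectivity.

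The gap is in the uniqueness (injectivity) step. Suppose $g : \mrm{L}\La_{\a}(M) \to \what{N}$ satisfies $g \circ \tau^{\mrm{L}}_M = 0$. Naturality gives $g = (\tau^{\mrm{L}}_{\what{N}})^{-1} \circ \mrm{L}\La_{\a}(g) \circ \tau^{\mrm{L}}_{\mrm{L}\La_{\a}(M)}$, and applying $\mrm{L}\La_{\a}$ to the hypothesis gives $\mrm{L}\La_{\a}(g) \circ \mrm{L}\La_{\a}(\tau^{\mrm{L}}_M) = 0$; so what you actually need is that $\mrm{L}\La_{\a}(\tau^{\mrm{L}}_M)$ is an isomorphism (or at least an epimorphism). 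The fact you invoke, that $\mrm{L}\La_{\a}$ restricted to $\cat{D}(\cat{Mod} A)_{\a \tup{-com}}$ is an equivalence (indeed naturally isomorphic to the identity there via $\tau^{\mrm{L}}$), does not deliver this, because the source of $\tau^{\mrm{L}}_M$ is the arbitrary K-flat complex $M$, which is not cohomologically complete. Theorem \ref{thm:201}(1) only says that $\tau^{\mrm{L}}_{\mrm{L}\La_{\a}(M)}$ is an isomorphism; this does not formally imply that the other morphism $\mrm{L}\La_{\a}(\tau^{\mrm{L}}_M) : \mrm{L}\La_{\a}(M) \to \mrm{L}\La_{\a}(\mrm{L}\La_{\a}(M))$ is one, and the two morphisms need not coincide a priori. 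The missing input is the idempotence of derived completion on morphisms, namely that $\mrm{L}\La_{\a}(\tau^{\mrm{L}}_M)$ (equivalently $\mrm{L}\La_{\a}(\tau_M)$) is an isomorphism when $\a$ is weakly proregular; this is available in \cite{PSY} (it is part of the MGM/GM-duality package, cf.\ \cite[Theorem 7.12]{PSY}), but it is not a formal consequence of the statements you quote. To be fair, the paper's proof is terse at the very same spot (it simply asserts that $\alpha \mapsto \beta$ is a two-sided inverse); with the above input both your argument and the paper's close, but as written your justification of uniqueness would fail.
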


\begin{proof}
(1) The morphism $\xi_M$ is an isomorphism by \cite[Proposition 3.6]{PSY}.
By Theorem \ref{thm:201}(1) the complex $\mrm{L} \La_{\a}(M)$ is
cohomologically complete; and therefore $\what{M}$ is also 
cohomologically complete. But this means that $\tau^{\mrm{L}}_{\what{M}}$ is an
isomorphism.

\medskip \noindent
(2) Take a morphism $\alpha : M \to \what{N}$ in $\cat{D}(\cat{Mod} A)$. 
By part (1) we know that $\xi_M$ and $\tau^{\mrm{L}}_{\what{N}}$ are
isomorphisms, so we can define 
\[ \beta 
:= (\tau^{\mrm{L}}_{\what{N}})^{-1} \circ \mrm{L} \La_{\a}(\alpha)  
\circ \xi_M^{-1} : \what{M} \to \what{N} . \]
The function $\alpha \mapsto \beta$ is an inverse to 
$\opn{Hom}(\tau_M, \bsym{1})$.
\end{proof}

\begin{proof}[Proof of Theorem \tup{\ref{thm:22}}]
We shall calculate $\opn{Ext}_{B}(P)$ indirectly. 

By Lemma \ref{lem:21} we know that $M$, and hence also $P$, is perfect over $A$.
So according to Lemma \ref{lem:22} there is an isomorphism of graded
$A$-algebras
\[ \opn{Ext}_{B}(P) \cong \opn{Ext}_{B^{\mrm{op}}}(D (P))^{\mrm{op}} . \]
Next we note that 
\[ D (P) = \opn{Hom}_A(P, I) \cong \opn{Hom}_A(P, A) = F (A) \] 
in $\til{\cat{D}}(\cat{DGMod} B^{\mrm{op}})$. Here 
$F$ is the functor from (\ref{eqn:451}).
Therefore we get an isomorphism of graded $A$-algebras
\[ \opn{Ext}_{B^{\mrm{op}}}(D (P)) \cong 
\opn{Ext}_{B^{\mrm{op}}}(F (A)) . \]

Let 
$N := \mrm{R} \Gamma_{\a} (A) \in \cat{D}(\cat{Mod} A)$.
We claim that $F (A) \cong  F (N)$ in $\til{\cat{D}}(\cat{DGMod} B^{\mrm{op}})$.
To see this, we first note that the canonical morphism 
$\sigma^{\mrm{R}}_A : N \to A$ in $\cat{D}(\cat{Mod} A)$ can be represented by
an actual DG module homomorphism
$N \to A$ (say by replacing $N$ with a K-projective resolution of it). Consider
the induced homomorphism 
$\opn{Hom}_A(P, N) \to \opn{Hom}_A(P, A)$
of DG $B^{\mrm{op}}$-modules. Like in the proof of Lemma \ref{lem:22}, it
suffices to show that this is a quasi-isomorphism of DG $A$-modules. This is
true since, by {\em GM Duality} \cite[Theorem 7.12]{PSY},
 the canonical morphism
\[ \opn{RHom}(\bsym{1}, \sigma^{\mrm{R}}_A) : 
\opn{RHom}_A(M, N) \to \opn{RHom}_A(M, A) \]
in $\cat{D}(\cat{Mod} A)$ is an isomorphism.
We conclude that there is a graded $A$-algebra isomorphism
\[ \opn{Ext}_{B^{\mrm{op}}}(F (A)) \cong 
\opn{Ext}_{B^{\mrm{op}}}(F (N)) . \]

Take $\cat{E} := \cat{D}(\cat{Mod} A)_{\a \tup{-tor}}$ in Theorem \ref{thm:34}.
Since 
\[ F|_{\cat{E}} : \cat{E} \to \til{\cat{D}}(\cat{DGMod} B^{\mrm{op}}) \]
is an equivalence, and $\cat{E}$ is full in 
$\cat{D}(\cat{Mod} A)$, we see that $F$
induces  an isomorphism of graded $A$-algebras 
\[ \opn{Ext}_{A}(N) \cong \opn{Ext}_{B^{\mrm{op}}}(F(N))  .  \]

The next step is to use the MGM equivalence.
We know that 
$\mrm{L} \Lambda_{\a} (N) \cong \what{A}$
in $\cat{D}(\cat{Mod} A)$. And the functor $\mrm{L} \Lambda_{\a}$ induces an
isomorphism of graded $A$-algebras
\[ \opn{Ext}_{A}(N) \cong \opn{Ext}_{A}(\what{A}) . \]

It remains to analyze the graded $A$-algebra 
$\opn{Ext}_{A}(\what{A})$.  
By Lemma \ref{lem:305} the homomorphism
\[ \opn{Hom}(\tau_A, \bsym{1}) : 
\opn{Hom}_{\cat{D}(\cat{Mod} A)} ( \what{A} , \what{A}[i] ) \to  
\opn{RHom}_{\cat{D}(\cat{Mod} A)} ( A, \what{A}[i] ) \]
is bijective for every $i$. Therefore  
$\opn{Ext}^i_{A}(\what{A}) = 0$ for $i \neq 0$, 
and the $A$-algebra homomorphism
$\what{A} \to \opn{Ext}^0_{A}(\what{A})$
is bijective. 

Combining all the steps above we see that 
$\opn{Ext}^i_{B}(P) = 0$ for $i \neq 0$, and there is an $A$-algebra
isomorphism 
$\opn{Ext}^0_{B}(P) \cong \what{A}^{\mrm{op}}$. 
But $\what{A}$ is commutative, so 
$\what{A}^{\mrm{op}} = \what{A}$.

Regarding the uniqueness: since the image of the ring homomorphism 
$A \to \what{A}$ is dense, and $\what{A}$ is $\what{\a}$-adically complete, it
follows that the only $A$-algebra automorphism of $\what{A}$ is the identity.
Therefore the $A$-algebra isomorphism 
$\opn{Ext}_{B}^0(P) \cong \what{A}$ that we produced is unique.
\end{proof}

\begin{rem}
To explain how surprising this theorem is, take the case 
$P = M := \mrm{K}(A; \bsym{a})$, the Koszul complex associated to a sequence
$\bsym{a} = (a_1, \ldots, a_n)$ that generates the ideal $\a$.

As a free $A$-module (forgetting the grading and the differential), we have
$P \cong A^{n^2}$. The grading of $P$ depends on $n$ only (it is an exterior
algebra). The differential of $P$ is the only place where the sequence
$\bsym{a}$ enters. Similarly, the DG algebra $B = \opn{End}_A(P)$ is a graded
matrix algebra over $A$, of size $n^2 \times n^2$. The differential of $B$ is
where $\bsym{a}$ is expressed. 

Forgetting the differentials, i.e.\ working with the graded $A$-module
$P^{\natural}$, classical Morita theory tells us that 
$\opn{End}_{B^{\natural}}(P^{\natural}) \cong A$
as graded $A$-algebras. Furthermore,  $P^{\natural}$ is a projective
$B^{\natural}$-module, so we even have 
$\opn{Ext}_{B^{\natural}}(P^{\natural}) \cong A$.

However, the theorem tells us that for the DG-module structure of $P$ we
have $\opn{Ext}_{B}(P) \cong \what{A}$.
Thus we get a transcendental outcome -- the completion $\what{A}$ -- 
by a homological operation with finite input (basically finite linear algebra
over $A$ together with a differential). 
\end{rem}

\begin{rem} \label{rem:1}
Our motivation to work on completion by derived double centralizer came from
looking at the recent paper \cite{Ef} by Efimov. The main result of \cite{Ef}
is Theorem 1.1 about the completion of the category 
$\cat{D}(\cat{QCoh} X)$ of a noetherian scheme $X$ along a closed subscheme
$Y$. This idea is attributed to Kontsevich. Corollary 1.2 of \cite{Ef} 
is a special case of our Theorem \ref{thm:22}: it has the extra assumptions
that the ring $A$ is noetherian and regular (i.e.\ it has finite global
cohomological
dimension). 

After writing the first version of our paper, we learned that a similar result
was proved by Dwyer-Greenlees-Iyengar \cite{DGI}. In that paper the authors
continue the work of \cite{DG} on derived completion and torsion. Their main
result is Theorem 4.10, which is a combination of  MGM equivalence and
derived Morita equivalence in an abstract setup (that includes algebra and
topology). The manifestation of this main result in commutative algebra is 
\cite[Proposition 4.20]{DGI}, that is also a special case of our 
Theorem \ref{thm:22}: the ring $A$ is noetherian, and the quotient ring
$A / \a$ is regular. 

Recall that our Theorem \ref{thm:22} only requires the ideal $\a$ to be weakly
proregular, and there is no regularity condition on the rings $A$ and $A / \a$
(the word ``regular'' has a double meaning here!).
It is quite possible that the methods of \cite{DGI} or \cite{Ef} can be pushed
further to remove the regularity conditions from the rings $A$ and $A / \a$.
However, it is less likely that these methods can handle the non-noetherian case
(i.e.\ assuming only that the ideal $\a$ is weakly proregular).
\end{rem}



\begin{thebibliography}{EGA1}

\bibitem[AJL]{AJL} L. Alonso, A. Jeremias and J. Lipman, 
  Local homology and cohomology on schemes, 
  Ann.\ Sci.\ ENS {\bf 30} (1997), 1-39.
  Correction, availabe online at
  \url{http://www.math.purdue.edu/~lipman/papers/homologyfix.pdf}.

\bibitem[BN]{BN}  M. Bokstedt and A. Neeman, 
Homotopy limits in triangulated categories, 
Compositio Math.\ {\bf 86} (1993), 209-234.

\bibitem[BV]{BV} A. Bondal and M. Van den Bergh, 
Generators and representability of functors in commutative and noncommutative
geometry, Moscow Math.\ J. {\bf 3} (2003), 1-36.

\bibitem[DG]{DG} W. G. Dwyer and J. P. C. Greenless,
Complete Modules and Torsion Modules,
American J.\ Math.\ {\bf 124}, No.\ 1 (2002), 199-220.

\bibitem[DGI]{DGI}  W.G. Dwyer, J.P.C. Greenlees and S. Iyengar,
Duality in algebra and topology,
Advances Math.\ {\bf 200} (2006), 357-402.

\bibitem[Ef]{Ef} A.I Efimov, 
Formal completion of a category along a subcategory, eprint 
arxiv:1006.4721 at http://arxiv.org.

\bibitem[GM]{GM} J.P.C. Greenlees and J.P. May, 
Derived functors of I-adic completion and local homology,
J. Algebra { \bf 149} (1992), 438-453.

\bibitem[Jo]{Jo} P. J\o{}rgensen,
Recollement for differential graded algebras,
J. Algebra {\bf 299} (2006) 589-601.

\bibitem[Ke]{Ke} B. Keller, 
	Deriving DG categories, 
	Ann.\ Sci.\ Ecole Norm.\ Sup.\ {\bf 27}, (1994) 63-102.

\bibitem[LN]{LN} J. Lipman and A. Neeman,
Quasi-perfect scheme-maps and boundedness of the twisted inverse image functor,
Illinois J. Math.\ {\bf 51}, Number 1 (2007), 209-236. 

\bibitem[PSY]{PSY} M. Porta, L. Shaul and A. Yekutieli,
On the Homology of Completion and Torsion,
to appear in Algebras and Representation Theory.

\bibitem[Ri]{Ri} J. Rickard, 
  Derived equivalences as derived functors, 
  J. London Math.\ Soc.\ {\bf 43} (1991), 37-48.

\bibitem[Ro]{Ro} R. Rouquier,
Dimensions of triangulated categories,
Journal of K-theory (2008), 1:193-256.

\bibitem[Sc]{Sc} P. Schenzel,
Proregular sequences, Local Cohomology, and Completion, 
Math.\ Scand.\ {\bf 92} (2003), 161-180.

\bibitem[Ye]{Ye} A. Yekutieli,
  On Flatness and Completion for Infinitely Generated Modules over Noetherian
  Rings, Comm.\ Algebra {\bf 39}, Issue 11 (2011), 4221-4245.

\end{thebibliography}
\end{document}